\documentclass[10pt,reqno]{amsart}
\usepackage{bbm}
\usepackage{mathrsfs}
\usepackage{amsfonts} 
\usepackage[dvipsnames,usenames]{color}
\textwidth=13.5cm 
\baselineskip=17pt 
\usepackage{graphicx,latexsym,bm,amsmath,amssymb,verbatim,multicol,lscape}
\vfuzz2pt 
\hfuzz2pt 
\newtheorem{thm}{Theorem} [section]

\newtheorem{lem}[thm]{Lemma}
\newtheorem{exm}[thm]{Example}

\theoremstyle{definition}
\newtheorem{defn}[thm]{Definition}
\theoremstyle{remark}

\numberwithin{equation}{section}

\begin{document}
\title[Igusa local zeta functions of a class of hybrid polynomials]
{Igusa local zeta functions of a class of hybrid polynomials}%
\author[Q.Y. Yin]{Qiuyu Yin}
\address{Mathematical College, Sichuan University, Chengdu 610064, P.R. China}
\email{yinqiuyu26@126.com}
\author[S.F. Hong]{Shaofang Hong*}
\address{Mathematical College, Sichuan University, Chengdu 610064, P.R. China}
\email{sfhong@scu.edu.cn, s-f.hong@tom.com, hongsf02@yahoo.com }
\thanks{*Hong is the corresponding author and was supported partially
by National Science Foundation of China Grant \#11371260.}

\keywords{local zeta functions, hybrid polynomials,
$\pi$-adic stationary phase formula}
\subjclass[2000]{Primary 11S40, 14G10, 11S80}
\date{\today}%
\begin{abstract}
In this paper, we study the Igusa's local zeta functions of a
class of hybrid polynomials with coefficients in a non-archimedean
local field of positive characteristic. Such class of hybrid
polynomial was first introduced by Hauser in 2003 to study
the resolution of singularities in positive characteristic.
We prove the rationality of these local zeta functions and
describe explicitly their poles. The proof is based on
Igusa's stationary phase formula.
\end{abstract}

\maketitle
\section{\bf Introduction and the main results}
Let $K$ be a non-archimedean local field with $\mathcal{O}_K$ as
its ring of integers. Let $\mathcal{P}_K$ denote the
maximal ideal of $\mathcal{O}_K$ and $\pi$ be a fixed uniformizing
parameter. Let the residue field of $K$ be $\mathbb{F}_q$, the finite
field of $q=p^r$ elements with $p$ being a prime number. For $x\in K$, we denote by
${\rm ord}(x)$ the {\it valuation} of $K$ such that ${\rm ord}(\pi)=1$,
and $|x|_K:=|x|=q^{-{\rm ord}(x)}$ is its {\it absolute value}.
For $x\in K$, the {\it angular component} of $x$, denoted by $ac(x)$,
is defined as $ac(x):=x\pi^{-{\rm ord}(x)}$.
Let $f(x)\in \mathcal{O}_K[x]$ with $x=(x_1, \cdots, x_n)$ be a
non-constant polynomial, and $\chi: \mathcal{O}_K^{\times}\rightarrow \mathbb{C}^{\times}$
be any given multiplicative character of $\mathcal{O}_K^{\times}$,
the group of units of $\mathcal{O}_K$. We put $\chi(0):=0$. Then
for any $s\in\mathbb{C}, {\rm Re}(s)>0$, the {\it Igusa's local
zeta function}, denoted by $Z(f, s, \chi, K)$, or written
$Z_f(s, \chi)$ for brevity, is defined as
$$Z(f, s, \chi, K)=Z_f(s, \chi):=\int_{\mathcal{O}_K^{n}}{\chi(ac f(x))|f(x)|^s|dx|},$$
where $|dx|$ is the Haar measure on $K^n$, normalized such that the
measure of $\mathcal{O}_K^{n}$ is one.

In the case ${\rm char}(K)=0$, Igusa \cite{[Igu1]} \cite{[Igu2]} proved that
$Z_f(s, \chi)$ is a rational function of $q^{-s}$, so it can be extended to
a meromorphic function on $\mathbb{C}$. In his proof, Igusa used the resolution
of singularities. If $\chi=\chi_{{\rm triv}}$, Denef \cite{[De1]} gave a proof
of the rationality of $Z_f(s, \chi_{{\rm triv}})$ without using resolution of
singularities. Furthermore, $Z_f(s, \chi_{{\rm triv}})$ is related to the number
of solutions of congruence $f(x)\equiv 0 \pmod{\pi^i\mathcal{O}_K}$
with $i\in \mathbb{Z}^{+}$. More precisely, we denote
$$N_i:=\#\{x\in (\mathcal{O}_K/\pi^i\mathcal{O}_K)^{n}
|f(x)\equiv 0 \pmod{\pi^i\mathcal{O}_K}\},$$
and set $N_0=1$. Let $P(t)$ be the {\it Poincar$\acute{e}$ series}
of $f$, that is,
$$P(t):=\sum_{i=0}^{\infty}{\dfrac{N_it^i}{q^{-ni}}}.$$
Then we have (see, for example, \cite{[De2]} and \cite{[Igu4]})
$$P(q^{-s})=\dfrac{1-q^{-s}Z_f(s, \chi_{{\rm triv}})}{1-q^{-s}}.$$
Therefore, if we can calculate the explicit form of $Z_f(s, \chi_{{\rm triv}})$,
then we obtain the formulas for all $N_i$. For an arbitrary character $\chi$,
it's known that $Z_f(s, \chi)$ is related to the exponential sums, see
\cite{[De2]} for more details.

However, much less is known if ${\rm char}(K)>0$, since the
techniques used by Igusa and Denef are not available in this case.
In particular, for any $f(x)\in \mathcal{O}_K[x]$,
the rationality of $Z_f(s, \chi)$ is still an open problem.
Z$\acute{u}$$\tilde{n}$iga-Galindo \cite{[ZG1]} proved that
$Z_f(s, \chi_{{\rm triv}})$ is a rational function of $q^{-s}$
when $f$ is a semiquasihomogeneous polynomial with coefficients
in an arbitrarily non-archimedean local field.
He also showed in \cite{[ZG2]} that if $f$ is a polynomial
globally non-degenerate with respect to its Newton polyhedra, then
$Z_f(s, \chi)$ is a rational function of $q^{-s}$. The basic tool he used
is called the {\it $\pi$-adic stationary phase formula}, which was first
introduced by Igusa \cite{[Igu3]} and then became a very powerful tool
in the study of the rationality of $Z_f(s, \chi)$ in arbitrary
characteristic case. For more progress in the rationality
of Igusa's local zeta function, see \cite{[M]}.

In this paper, we study the local zeta functions of a class of hybrid
polynomials $f$ in three variables, and we will prove that for these $f$,
$Z_f(s, \chi)$ is a rational function of $q^{-s}$. Moreover, we will
give a list of candidate poles of $Z_f(s, \chi)$.
From now on, let $K$ be a non-archimedean local field of characteristic
$p$, where $p$ is a fixed prime number. First of all,
we give the definition of hybrid polynomials in three variables.
\begin{defn}\label{defn 1.1}
Let $k$, $r$ and $l$ be positive integers with
$p\nmid rl$, $p\mid (r+l+k)$ and $\bar{r}+\bar{l}\leq p$, where $\bar{r}$ and
$\bar{l}$ denote the residues of $r$ and $l$ modulo $p$. If $t$ is an arbitrary
constant from the ground field $K$, then the polynomial
\begin{equation}\label{eqno 1.1}
f(x, y, z)=x^p+y^rz^l\sum_{i=0}^{k}{\dbinom{k+r}{i+r}y^i(tz-y)^{k-i}}
\end{equation}
is called {\it hybrid polynomial}.
\end{defn}

Hybrid polynomials were first introduced by Hauser (see \cite{[Ha1]}
or \cite{[Ha2]}) to study the resolution of singularities in positive
characteristic. When $r=l=1$ and $t^{k+1}$ is a $p$-th power of an
arbitrary unit from $\mathcal{O}_K$, Le$\acute{o}$n-Cardenal, Ibadula
and Segers \cite{[LIS]} used the method of \cite{[ZG1]} to prove that
$Z_f(s, \chi)$ is a rational function of $q^{-s}$. More explicitly,
they provided a list with just three candidate poles of $Z_f(s, \chi)$.

In the present paper, we consider the local zeta functions for a more general
class of hybrid polynomials, that is, we study the local zeta functions for
the following hybrid polynomials:
\begin{equation}\label{eqno 1.2}
g(x, y, z)=x^p+yz^l\sum_{i=0}^{k}{\dbinom{k+1}{i+1}y^i(tz-y)^{k-i}},
\end{equation}
where $l>1$ is an integer and satisfies the condition in
Definition \ref{defn 1.1}, and $t^{k+1}$ is a $p$-th power of
an arbitrary unit from $\mathcal{O}_K$. For these $f$,
we shall prove the rationality of $Z_f(s, \chi)$
and give the candidate poles of $Z_f(s, \chi)$.

Now let $g(x, y, z)$ be the hybrid polynomial of the
form (\ref{eqno 1.2}). First, we make change of variables
of the form: $(x, y, z)\mapsto (x_1, tz_1+y_1, z_1)$,
then we get that

\begin{align}\label{1.3}
g_1(x_1, y_1, z_1)&=x^p_1+(tz_1+y_1)z^l_1\sum_{i=0}^{k}
{\dbinom{k+1}{i+1}(tz_1+y_1)^i(-y_1)^{k-i}}\nonumber\\
&=x^p_1+z^l_1\sum_{i=0}^{k}{\dbinom{k+1}{i+1}
(tz_1+y_1)^{i+1}(-y_1)^{k-i}}\nonumber\\
&=x^p_1+z^l_1\sum_{i=1}^{k+1}{\dbinom{k+1}{i}
(tz_1+y_1)^{i}(-y_1)^{k+1-i}}\nonumber\\
&=x^p_1+t^{k+1}z^{k+1+l}_1+(-1)^{k}y^{k+1}_1z^l_1.
\end{align}
The above discussion tells us that we can consider the local
zeta functions of polynomials of following general form:
\begin{equation}\label{eqno 1.4}
f(x, y, z)=x^p+\alpha y^nz^l+\beta z^{n+l},
\end{equation}
where $\alpha\in \mathcal{O}_K^{\times}$, $\beta$ is a $p$-th power of
an arbitrary unit from $\mathcal{O}_K$ and $n$ is a positive integer
with $p\mid (n+l)$. If $l=1$, then the local zeta function $Z_f(s, \chi)$
was studied in \cite{[LIS]}. In this paper, we are able to treat with the
general case $l>1$. Actually, we have the following main results of this paper.
\begin{thm}\label{thm 1.1}
Let $f(x, y, z)=x^p+\alpha y^nz^l+\beta z^{n+l}$,
where $\alpha\in \mathcal{O}_K^{\times}$, $\beta^{1/p}\in\mathcal{O}_K^{\times}$.
$n$ is a positive integer and $l>1$ is an integer
such that $p\nmid l$ and $p\mid (n+l)$.
Then the local zeta function $Z_f(s, \chi)$
is a rational function of $q^{-s}$, and furthermore, we have
$$Z_f(s, \chi)=\dfrac{L(q^{-s}, \chi)}{(1-q^{-1-s})(1-q^{-((n+l)/p+2)-(n+l)s})
(1-q^{-p-n-nps})(1-q^{-p-l-pls})},$$
where $L(q^{-s}, \chi)$ is a polynomial with complex coefficients.
\end{thm}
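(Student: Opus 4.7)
The first move is to eliminate the term $\beta z^{n+l}$ via a change of variable only available in characteristic $p$. Setting $m := (n+l)/p$ (a positive integer) and using both $\beta^{1/p}\in\mathcal{O}_K^{\times}$ and the identity $(a-b)^p = a^p - b^p$ valid in characteristic $p$, the triangular substitution $x \mapsto x - \beta^{1/p} z^m$ preserves $\mathcal{O}_K^{3}$, has Jacobian $1$, and yields
\[
f\bigl(x - \beta^{1/p} z^m,\, y,\, z\bigr) = x^p - \beta z^{pm} + \alpha y^n z^l + \beta z^{n+l} = x^p + \alpha y^n z^l =: \tilde{f}(x,y,z).
\]
Hence $Z_f(s,\chi) = Z_{\tilde f}(s,\chi)$, and it suffices to treat the reduced polynomial $\tilde f$. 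Observe that $p\mid (n+l)$ combined with $p\nmid l$ forces $p\nmid n$, so both $n$ and $l$ are coprime to $p$.

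Next I apply Igusa's $\pi$-adic stationary phase formula (SPF) to $\tilde f$, partitioning $\mathcal{O}_K^3 = \bigsqcup_{\bar a\in\mathbb{F}_q^3}(\bar a + \pi\mathcal{O}_K^3)$. Classes with $\bar{\tilde f}(\bar a)\neq 0$ contribute only polynomial terms in $q^{-s}$. Classes at smooth points of $V(\bar{\tilde f})$---where, because $\partial_X\bar{\tilde f}\equiv 0$ in characteristic $p$, smoothness reduces to the requirement that one of $\partial_y\bar{\tilde f}$, $\partial_z\bar{\tilde f}$ be nonzero at $\bar a$---admit a Hensel-type local change of variable making $\tilde f$ linear in one coordinate, so that a one-dimensional integral of the form $\int_{\mathcal{O}_K}\chi(ac\,u)|u|^s|du|$ contributes the denominator factor $(1-q^{-1-s})$. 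A direct computation of the Jacobian ideal shows that the singular locus of $\bar{\tilde f}$ is $\{X=y=0\}\cup\{X=z=0\}\subset\mathbb{A}^3_{\mathbb{F}_q}$, with the two components meeting at the origin; the remaining work is to handle the three singular strata.

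Near $(0,0,\bar z_0)$ with $\bar z_0\in\mathbb{F}_q^{\times}$, the substitution $X=\pi X_1,\ y=\pi y_1,\ z=z_0+\pi z_1$ gives $\tilde f = \pi^p X_1^p + \alpha\pi^n y_1^n(z_0+\pi z_1)^l$. Stratifying the inner integral by $(\mathrm{ord}\,X_1,\mathrm{ord}\,y_1)$, the dominant contribution arises from the balanced stratum where the valuations of the two terms agree; since $\gcd(p,n)=1$, these balanced strata are indexed by a positive integer $j$ and produce a geometric series with denominator $(1-q^{-p-n-nps})$. The symmetric analysis near $(0,\bar y_0,0)$ produces $(1-q^{-p-l-pls})$. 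At the origin $(0,0,0)$, the substitution $X=\pi X_1,\ y=\pi y_1,\ z=\pi z_1$ yields $\tilde f = \pi^p(X_1^p + \alpha\pi^{p(m-1)}y_1^n z_1^l)$. Setting $g_k(x,y,z) := x^p + \alpha\pi^{pk}y^n z^l$ and $J_k := \int_{\mathcal{O}_K^3}\chi(ac\,g_k)|g_k|^s|dx\,dy\,dz|$, so that $Z_{\tilde f}(s,\chi) = J_0$, the origin contribution equals $q^{-3-ps}J_{m-1}$. A second application of SPF to each $J_k$ (for $k\geq 1$) at the singular class $X\in\pi\mathcal{O}_K$ yields the recursion $J_k = Q_k + q^{-1-ps}J_{k-1}$ with $Q_k\in\mathbb{C}[q^{-s}]$ polynomial. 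Unwinding this recursion $m-1$ times and substituting back produces exactly the denominator factor $(1-q^{-(m+2)-pms})$.

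The principal obstacle is this iterated origin analysis for $m\geq 2$: each nested SPF peels off a polynomial piece that must ultimately be absorbed into $L(q^{-s},\chi)$, and one must verify that after $m-1$ unwinding steps the system closes into a single closed-form denominator rather than spawning spurious factors. A secondary technical obstacle is that $\chi(ac\,\tilde f)$ depends on the conductor $c$ of $\chi$, so residue classes must be refined to $\bar a + \pi^c\mathcal{O}_K^3$ in order that the character factor be locally constant and the geometric sums close. Once these are controlled, assembling the contributions from all strata gives the stated rational function with the four claimed denominator factors and the remainder packaged as $L(q^{-s},\chi)$.
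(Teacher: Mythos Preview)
Your approach is correct but takes a genuinely different route from the paper's.

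Your key innovation is the opening substitution $x\mapsto x-\beta^{1/p}z^{m}$, which uses $\beta^{1/p}\in\mathcal{O}_K^\times$ and the Frobenius identity in characteristic $p$ to kill the $\beta z^{n+l}$ term outright. The paper never does this: it carries the $\beta$ term through the entire computation, which forces it to prove the auxiliary Lemmas~2.7 and~2.8 specifically to control the pieces where $\beta z^{n+l}$ interacts with the other monomials. Your reduction to $\tilde f=x^p+\alpha y^nz^l$ makes those lemmas unnecessary and gives a cleaner singular locus.

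The paper extracts the factor $(1-q^{-(m+2)-(n+l)s})$ in one shot by observing the self-similarity $Z_f(s,\chi,A)=q^{-(m+2)-(n+l)s}Z_f(s,\chi)$ under $(x,y,z)\mapsto(\pi^m x,\pi y,\pi z)$, then decomposes the complement $A^c$ into seven explicit valuation boxes $A_1,\dots,A_7$ and treats each separately. Your recursion $J_k=Q_k+q^{-1-ps}J_{k-1}$ unwound $m-1$ times is exactly this same self-similarity broken into single-$\pi$ steps, so the two arguments coincide at the origin. The remaining factors $(1-q^{-p-n-nps})$ and $(1-q^{-p-l-pls})$ arise in the paper from boxes $A_6$ and $A_1$ via Lemmas~2.7 and~2.5, whereas you obtain them from the singular lines $\{x=y=0\}$ and $\{x=z=0\}$ via a two-variable Newton-polyhedron computation; these are the same integrals reached by different bookkeeping.

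Two small points to tighten. First, your singular-locus description $\{x=y=0\}\cup\{x=z=0\}$ is only correct when $n\ge 2$; if $n=1$ the line $\{x=y=0\}$ is smooth away from the origin and the factor $(1-q^{-p-1-ps})$ does not arise from that stratum---but this is harmless since the theorem's denominator is not claimed to be minimal. Second, your ``balanced-stratum geometric series'' for the two lines is exactly the globally-non-degenerate Newton-polyhedron result (the paper's Lemma~2.5), and you should cite or reprove it rather than leave it as a sketch; the same remark applies to the conductor refinement, which is absorbed into the $v(\bar f,D,\chi)$ term of the stationary phase formula.
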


Notice that if $g$ is of the form (\ref{eqno 1.2}) and $g^{'}$
is of the form (\ref{1.3}), then one can easily show that
$Z_g(s, \chi)=Z_{g^{'}}(s, \chi)$. Thus Theorem \ref{thm 1.1}
applied to $g'$ gives immediately the following result.

\begin{thm}\label{thm 1.2}
Let $g(x, y, z)$ be the hybrid polynomial of the form (\ref{eqno 1.2}).
Then the local zeta function $Z_g(s, \chi)$ is a rational function
of $q^{-s}$. Moreover, we have
$$Z_g(s, \chi)=\dfrac{L(q^{-s}, \chi)}{(1-q^{-1-s})(1-q^{-((k+l+1)/p+2)-(k+l+1)s})
(1-q^{-p-(k+1)-(k+1)ps})(1-q^{-p-l-pls})},$$
where $L(q^{-s}, \chi)$ is a polynomial with complex coefficients.
\end{thm}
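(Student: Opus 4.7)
The plan is to reduce Theorem \ref{thm 1.2} to Theorem \ref{thm 1.1} via the change of variables already sketched in equation (\ref{1.3}), then specialize the denominator formula. First, I would observe that the hypothesis ``$t^{k+1}$ is a $p$-th power of an arbitrary unit from $\mathcal{O}_K$'' forces $t\in\mathcal{O}_K^{\times}$: writing $t^{k+1}=u^p$ with $u\in\mathcal{O}_K^{\times}$ gives $(k+1)\,\mathrm{ord}(t)=0$, hence $\mathrm{ord}(t)=0$. Consequently the affine map $\varphi:(x_1,y_1,z_1)\mapsto(x_1,\,tz_1+y_1,\,z_1)$ is a measure-preserving bijection of $\mathcal{O}_K^{3}$ onto itself: it is $K$-linear with determinant $1$, and since $t\in\mathcal{O}_K$ both $\varphi$ and $\varphi^{-1}$ send $\mathcal{O}_K^{3}$ into itself.

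Next, I would carry out the elementary algebraic manipulation displayed in (\ref{1.3}) to confirm
\[
(g\circ\varphi)(x_1,y_1,z_1)=x_1^{p}+t^{k+1}z_1^{k+l+1}+(-1)^{k}y_1^{k+1}z_1^{l}.
\]
Combining this identity with the change of variables formula for the Haar measure (and the fact that $ac$ and $|\cdot|$ are composed with a polynomial identity, so the substitution $x=x_1$, $y=tz_1+y_1$, $z=z_1$ commutes with forming the integrand) yields
\[
Z_{g}(s,\chi)=\int_{\mathcal{O}_K^{3}}\chi\bigl(ac\,(g\circ\varphi)\bigr)\,\bigl|g\circ\varphi\bigr|^{s}\,|dx_1\,dy_1\,dz_1|=Z_{g_1}(s,\chi),
\]
where $g_1$ has the form (\ref{eqno 1.4}) with $n=k+1$, $\alpha=(-1)^{k}\in\mathcal{O}_K^{\times}$, $\beta=t^{k+1}$, and $\beta^{1/p}=u\in\mathcal{O}_K^{\times}$.

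Then I would verify that the numerical hypotheses of Theorem \ref{thm 1.1} hold for $g_1$. Comparing Definition \ref{defn 1.1} (specialized to $r=1$, which is the case of (\ref{eqno 1.2})) with the hypotheses of Theorem \ref{thm 1.1}: the condition $p\nmid rl=l$ gives $p\nmid l$; the condition $p\mid(r+l+k)=1+l+k$ is exactly $p\mid(n+l)$ with $n=k+1$; and $l>1$ is given. Hence Theorem \ref{thm 1.1} applies to $g_1$, yielding rationality in $q^{-s}$ and
\[
Z_{g_1}(s,\chi)=\frac{L(q^{-s},\chi)}{(1-q^{-1-s})(1-q^{-((n+l)/p+2)-(n+l)s})(1-q^{-p-n-nps})(1-q^{-p-l-pls})}.
\]
Substituting $n=k+1$ and using $Z_g=Z_{g_1}$ gives precisely the denominator in Theorem \ref{thm 1.2}.

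The proof is essentially bookkeeping; no genuine obstacle arises, since the hard analytic work is absorbed in Theorem \ref{thm 1.1}. The only point requiring a small amount of care is justifying that $\varphi$ preserves both the domain $\mathcal{O}_K^{3}$ and the normalized Haar measure on it, which is where I would be most careful about checking $t\in\mathcal{O}_K^{\times}$ and that the Jacobian equals $1$.
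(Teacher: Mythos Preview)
Your proposal is correct and follows essentially the same route as the paper: reduce to Theorem \ref{thm 1.1} via the change of variables $(x,y,z)\mapsto(x_1,tz_1+y_1,z_1)$ (equation (\ref{1.3})), check that the hypotheses match under $n=k+1$, $\alpha=(-1)^k$, $\beta=t^{k+1}$, and then specialize the denominator. You are in fact more explicit than the paper about why $t\in\mathcal{O}_K^{\times}$ and why the map preserves $\mathcal{O}_K^3$ and Haar measure, points the paper simply asserts with ``one can easily show.''
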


Using Theorem \ref{thm 1.2} and some standard results in complex analysis,
we can provide a list of candidate poles of local zeta function of the
hybrid polynomial of the form (\ref{eqno 1.2}).

\begin{thm}\label{thm 1.3}
Let $g(x, y, z)$ be the hybrid polynomial of the form (\ref{eqno 1.2}).
If $s$ is a pole of $Z_g(s, \chi)$, then $s$ equals one of the following
four forms:
\begin{align}
&-1+\dfrac{2\pi i\mathbb{Z}}{\log q},\ \ \
-\dfrac{1}{p}-\dfrac{2}{k+1+l}+\dfrac{2\pi i\mathbb{Z}}{(k+1+l)\log q}, \nonumber\\
&-\dfrac{1}{p}-\dfrac{1}{k+1}+\dfrac{2\pi i\mathbb{Z}}{p(k+1)\log q},
\ \ \ -\dfrac{1}{p}-\dfrac{1}{l}+\dfrac{2\pi i\mathbb{Z}}{pl\log q}. \nonumber
\end{align}
\end{thm}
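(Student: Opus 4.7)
The plan is to read off the candidate poles directly from the explicit formula for $Z_g(s,\chi)$ furnished by Theorem \ref{thm 1.2}. Since $L(q^{-s},\chi)$ is a polynomial in the variable $q^{-s}$, it is an entire function of $s\in\mathbb{C}$; consequently every pole of $Z_g(s,\chi)$ must be a zero of the denominator
\[
(1-q^{-1-s})(1-q^{-((k+l+1)/p+2)-(k+l+1)s})(1-q^{-p-(k+1)-(k+1)ps})(1-q^{-p-l-pls}).
\]

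The key elementary observation from complex analysis is that for real constants $a,b$ with $b\neq 0$, the equation $1-q^{-a-bs}=0$, i.e.\ $e^{-(a+bs)\log q}=1$, is equivalent to $(a+bs)\log q\in 2\pi i\mathbb{Z}$, whose solution set is the arithmetic progression
\[
s=-\frac{a}{b}+\frac{2\pi i\mathbb{Z}}{b\log q}.
\]
I would then apply this formula to each of the four factors in the denominator separately.

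Carrying out the four substitutions with $(a,b)$ equal, respectively, to $(1,1)$, $\bigl((k+l+1)/p+2,\,k+l+1\bigr)$, $\bigl(p+(k+1),\,(k+1)p\bigr)$, and $(p+l,\,pl)$, and simplifying $-a/b$ in each case (for instance $-(p+(k+1))/((k+1)p)=-1/p-1/(k+1)$, and $-(p+l)/(pl)=-1/p-1/l$), produces exactly the four arithmetic progressions listed in the statement of Theorem \ref{thm 1.3}. Since any pole of $Z_g(s,\chi)$ must lie in the union of these four progressions, the theorem follows.

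No real obstacle arises here: the argument is a direct computation, and the only bookkeeping is to verify that the constant term $-a/b$ simplifies as claimed in each of the last three factors. The substantive work, namely the rationality of $Z_g(s,\chi)$ together with the explicit denominator, has already been carried out in Theorems \ref{thm 1.1} and \ref{thm 1.2}; Theorem \ref{thm 1.3} is merely the corresponding list of zeros of that denominator.
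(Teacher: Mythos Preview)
Your proposal is correct and is precisely the approach taken in the paper: the authors state that Theorem~\ref{thm 1.3} follows from Theorem~\ref{thm 1.2} together with ``some standard results in complex analysis,'' without giving further details. Your write-up simply makes those standard results explicit by solving $1-q^{-a-bs}=0$ for each of the four denominator factors, which is exactly the intended argument.
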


\noindent{\bf Remark 1.5.}
By Theorem \ref{thm 1.2}, we obtain the same three poles
as the case considered in \cite{[LIS]}, but we also get a new extra pole,
which depends on the characteristic of $K$ and the parameter $l$.

The paper is organized as follows. In Section 2, we review the
stationary phase formula and some results about Newton polyhedra.
Then we state some lemmas which are needed in the proof of
our main theorems. Finally, in Section 3, we give the proof
of Theorem \ref{thm 1.1} and supply also an example to illustrate
the validity of Theorem \ref{thm 1.1}.

\section{\bf Preliminaries and lemmas }

\subsection{\bf Stationary phase formula}

In \cite{[Igu3]}, Igusa introduced the stationary phase formula for
$\pi$-adic integrals. He suggested that a detailed study of this formula
might lead to a proof of the rationality of the local zeta function in
arbitrary characteristic. This formula is our main tool in this paper.
In this section, we review the stationary phase formula.

We denote by $\bar{x}$ the image of an element $x\in\mathcal{O}_K^n$ under
the canonical homomorphism
$\mathcal{O}_K^n\rightarrow (\mathcal{O}_K/\pi\mathcal{O}_K)^n\cong \mathbb{F}_q^n$.
For $f(x)\in \mathcal{O}_K[x]$, $\bar{f} (x)$ stands for the polynomial
obtained by reducing modulo $\pi$ the coefficients of $f(x)$.
Let $A$ be any ring and $f(x)\in A[x]$. We denote by $V_f(A)$
the set of $A$-value points of the hypersurface $V_f$ defined by $f$, i.e.
$V_f(A):=\{x\in A^n|f(x)=0\}$. By Sing$_f(A)$ we denote the set
of $A$-value singular points of $V_f$, namely,
$${\rm Sing}_f(A):=\Big\{x\in A^n\Big|f(x)
=\dfrac{\partial f}{\partial x_1}(x)=\cdots
=\dfrac{\partial f}{\partial x_n}(x)=0\Big\}.$$

We fix a lifting $R$ of $\mathbb{F}_q$ in $\mathcal{O}_K$. That is,
the set $R^n$ is mapped bijectively onto $\mathbb{F}_q^n$ by the
canonical homomorphism
$\mathcal{O}_K^n\rightarrow (\mathcal{O}_K/\pi\mathcal{O}_K)^n$.
Let $\bar{D}$ be a subset of $\mathbb{F}_q^n$ and $D$ be its preimage
under the canonical homomorphism. We also denote by $S(f, D)$ the subset
of $R^n$ mapped bijectively to the set
${\rm Sing}_{\bar{f}}(\mathbb{F}_q)\cap \bar{D}$. Furthermore, we denote
\begin{align}
v(\bar{f}, D, \chi):={\left\{\begin{array}{rl}
q^{-n} \cdot \#\{\bar{P}\in \bar{D}|\bar{P}\notin V_{\bar{f}}(\mathbb{F}_q)\}, \ \ \ \
&{\rm if} \ \chi=\chi_{{\rm triv}}, \nonumber\\
q^{-nc_{\chi}}\sum_{\{P\in D|\bar{P}\notin V_{\bar{f}}(\mathbb{F}_q)\}
{\rm mod}\ \mathcal{P}_K^{c_{\chi}}}{\chi(ac f(P)),}\  \ \ \
&{\rm if} \ \chi \neq \chi_{{\rm triv}},\nonumber
\end{array}\right.}
\end{align}
where $c_{\chi}$ is the conductor of $\chi$, and
\begin{align}
\sigma(\bar{f}, D, \chi):={\left\{\begin{array}{rl}
q^{-n}\cdot \#\{\bar{P}\in \bar{D}|\bar{P}\ {\rm is\ a\ nonsingular\ point\ of} \
V_{\bar{f}}(\mathbb{F}_q)\}, \ \  &{\rm if} \ \chi=\chi_{{\rm triv},}\nonumber\\
0,\ \   & {\rm if} \ \chi \neq \chi_{{\rm triv}}.\nonumber
\end{array}\right.}
\end{align}
Finally, we set
$$Z_f(s, \chi, D):=\int_D \chi(ac(f(x)))|f(x)|^s|dx|.$$

Now we can state Igusa's stationary phase formula as follows.

\begin{lem}({\rm Stationary phase formula})
(\cite{[Igu3]} \cite{[ZG2]})\label{lem 2.1}
For any complex number $s$ with ${\rm Re}(s)>0$, we have
\begin{align}
Z_f(s, \chi, D)=v(\bar{f}, D, \chi)&+\sigma(\bar{f}, D, \chi)
\dfrac{(1-q^{-1})q^{-s}}{1-q^{-1-s}}\nonumber\\
&+\sum_{P\in S(f, D)}{q^{-n}
\int_{\mathcal{O}_K^n} \chi(ac(f_P(x)))|f_P(x)|^s|dx|}\nonumber,
\end{align}
where $f_P(x):=f(P+\pi x)$.
\end{lem}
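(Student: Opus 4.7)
The plan is to partition $D$ into residue classes modulo $\pi$ and analyze the integrand on each ball according to the local behaviour of $\bar f$ at the corresponding residue point. More precisely, I would decompose
$$D = \bigsqcup_{\bar P \in \bar D}\bigl(P + \pi\mathcal{O}_K^n\bigr),$$
where $P \in R^n$ is the canonical lift of $\bar P$, and substitute $x = P + \pi y$ on each piece so that $|dx| = q^{-n}|dy|$. This writes $Z_f(s, \chi, D)$ as $q^{-n}\sum_{\bar P \in \bar D}\int_{\mathcal{O}_K^n}\chi(ac f(P + \pi y))|f(P + \pi y)|^s |dy|$, and I would group the terms according to whether (i) $\bar P \notin V_{\bar f}(\mathbb{F}_q)$, (ii) $\bar P$ is a smooth point of $V_{\bar f}$, or (iii) $\bar P \in {\rm Sing}_{\bar f}(\mathbb{F}_q)$.

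Case (i) is essentially bookkeeping: $f(P + \pi y)$ is a unit on the whole ball, so $|f|^s \equiv 1$, while $\chi(ac f(x)) = \chi(f(x))$ factors through reduction modulo $\pi^{c_{\chi}}$; summing these pieces reproduces the definition of $v(\bar f, D, \chi)$. Case (iii) is immediate once one notes that $f(P + \pi y) = f_P(y)$ by definition, so the contribution is already of the claimed shape.

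The substantive step is case (ii). Here some partial derivative $\partial_i f(P)$ is a unit; without loss of generality take $i = 1$. Since $f(P) \in \pi\mathcal{O}_K$, the polynomial $h(y) := f(P + \pi y)/\pi$ lies in $\mathcal{O}_K[y]$ and satisfies $\partial_1 h(y) \equiv \partial_1 f(P) \pmod \pi$, which is a unit. A Hensel lifting argument then shows that the map $\Phi(y) = (h(y), y_2, \dots, y_n)$ is a measure-preserving analytic automorphism of $\mathcal{O}_K^n$. After this change of variables, the ball integral becomes
$$q^{-s}\int_{\mathcal{O}_K^{n-1}}|dy'|\int_{\mathcal{O}_K}\chi(ac z)|z|^s\,|dz| = q^{-s}\int_{\mathcal{O}_K}\chi(ac z)|z|^s\,|dz|,$$
which, by decomposing $\mathcal{O}_K = \{0\}\sqcup\bigsqcup_{k \geq 0}\pi^k\mathcal{O}_K^{\times}$, equals $(1-q^{-1})q^{-s}/(1-q^{-1-s})$ when $\chi = \chi_{\rm triv}$ and vanishes otherwise since $\int_{\mathcal{O}_K^{\times}}\chi(u)|du| = 0$ for nontrivial $\chi$. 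Multiplying by $q^{-n}$ and summing over the smooth residues produces exactly the $\sigma$-term of the formula.

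The main obstacle is the Hensel step in case (ii): one needs to verify that, for each target $(w_1, y_2, \dots, y_n) \in \mathcal{O}_K^n$, the equation $h(y_1, y_2, \dots, y_n) = w_1$ has a unique solution $y_1 \in \mathcal{O}_K$. This follows from Hensel's lemma applied to the one-variable polynomial $h(\cdot, y_2, \dots, y_n) - w_1$ whose derivative in $y_1$ is a unit in $\mathcal{O}_K$. Everything else---Fubini, the geometric series on the annuli $\pi^k\mathcal{O}_K^{\times}$, and the vanishing of $\chi$-integrals against Haar measure on $\mathcal{O}_K^{\times}$---is routine.
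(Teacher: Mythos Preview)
The paper does not prove this lemma; it is stated with citations to Igusa \cite{[Igu3]} and Z\'u\~niga-Galindo \cite{[ZG2]} and then used as a black box (the paper's own contribution begins with Lemma~\ref{lem 2.2}, which repackages the singular term). Your argument is exactly the standard proof one finds in those sources: partition $D$ into residue balls, trichotomize according to whether $\bar P$ lies off $V_{\bar f}$, is smooth on $V_{\bar f}$, or is singular, and in the smooth case use the Hensel change of variables $\Phi(y)=(h(y),y_2,\dots,y_n)$ with $h(y)=f(P+\pi y)/\pi$ to reduce to the one-variable integral $\int_{\mathcal{O}_K}\chi(ac\,z)\,|z|^s\,|dz|$.

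The one point worth a remark is the surjectivity of $\Phi$. Hensel's lemma in the form you invoke needs an initial approximation $y_1^{(0)}$ with $h(y_1^{(0)},y_2,\dots,y_n)\equiv w_1\pmod\pi$. This is available because the reduction $\bar h(y)$ is affine-linear in $y$ (all terms of $f(P+\pi y)$ of total degree $\ge 2$ in $y$ carry a factor $\pi^2$, hence vanish in $h$ modulo $\pi$) with $y_1$-coefficient $\overline{\partial_1 f(P)}\in\mathbb{F}_q^\times$. You use this implicitly but do not say it; once noted, the rest of your case~(ii) computation, the bookkeeping in case~(i), and the tautology in case~(iii) are all correct.
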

For the convenience of calculation, we need the following Lemma.
The lemma is also called the stationary phase formula, and we
give its proof here.

\begin{lem}\label{lem 2.2}
For any complex number $s$ with ${\rm Re}(s)>0$, we have
\begin{align*}
Z_f(s, \chi, D)=v(\bar{f}, D, \chi)&+\sigma(\bar{f}, D, \chi)
\dfrac{(1-q^{-1})q^{-s}}{1-q^{-1-s}}+Z_f(s, \chi, D_{S(f, D)}),
\end{align*}
where $D_{S(f, D)}:=\bigcup_{P\in S(f, D)}D_P$ with
$D_P:=\{x\in\mathcal{O}_K^n|x-P\in(\pi\mathcal{O}_K)^n\}$,
that is, $D_{S(f, D)}$ is the preimage of
${\rm Sing}_{\bar{f}}(\mathbb{F}_q)\cap \bar{D}$
under the canonical homomorphism
$\mathcal{O}_K^n\rightarrow (\mathcal{O}_K/\pi\mathcal{O}_K)^n$.
\end{lem}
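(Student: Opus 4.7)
The plan is to obtain Lemma 2.2 as an essentially cosmetic reformulation of Lemma 2.1: the first two terms of the two identities match verbatim, so all that needs to be shown is that the third term in Lemma 2.1, namely
$$\sum_{P\in S(f, D)} q^{-n}\int_{\mathcal{O}_K^n}\chi\bigl(ac(f_P(x))\bigr)|f_P(x)|^s|dx|,$$
coincides with $Z_f(s,\chi, D_{S(f,D)})$. So the whole proof reduces to unpacking the definition of $D_{S(f,D)}$ and performing a single change of variables in each summand.

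First I would record that $D_{S(f,D)}=\bigsqcup_{P\in S(f,D)} D_P$ is a \emph{disjoint} union: by construction the sets $D_P = P + (\pi\mathcal{O}_K)^n$ are exactly the residue classes modulo $\pi$ whose reduction lies in $\operatorname{Sing}_{\bar f}(\mathbb{F}_q)\cap \bar D$, and different representatives $P \in S(f,D)\subset R^n$ belong to different residue classes by the choice of the lifting $R$. Additivity of the integral over this disjoint union then yields
$$Z_f(s,\chi,D_{S(f,D)})=\sum_{P\in S(f,D)} \int_{D_P}\chi\bigl(ac(f(x))\bigr)|f(x)|^s|dx|.$$

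Next, for each fixed $P\in S(f,D)$, I would perform the change of variables $x=P+\pi y$ with $y\in \mathcal{O}_K^n$. This is a measure-preserving bijection $\mathcal{O}_K^n \to D_P$ up to the Jacobian factor $|\pi|^n=q^{-n}$, and by the definition $f_P(y)=f(P+\pi y)$ the integrand transforms as $\chi(ac(f(x)))|f(x)|^s = \chi(ac(f_P(y)))|f_P(y)|^s$. Substituting back,
$$\int_{D_P}\chi\bigl(ac(f(x))\bigr)|f(x)|^s|dx|= q^{-n}\int_{\mathcal{O}_K^n}\chi\bigl(ac(f_P(y))\bigr)|f_P(y)|^s|dy|,$$
and summing over $P\in S(f,D)$ reproduces the third term of Lemma 2.1. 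Combining this with Lemma 2.1 gives the statement.

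There is no real obstacle here; the only thing to be careful about is verifying the disjointness in the first step (so that $Z_f(s,\chi,\cdot)$ is additive over the decomposition) and tracking the Haar-measure normalization so that $|dx|$ on $D_P$ pulls back to $q^{-n}|dy|$ on $\mathcal{O}_K^n$. Everything else is automatic from Lemma 2.1.
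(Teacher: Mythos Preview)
Your proposal is correct and matches the paper's proof essentially line for line: both reduce Lemma~2.2 to Lemma~2.1 by observing that the sets $D_P$ are pairwise disjoint and that the change of variables $x=P+\pi y$ (equivalently $y=\pi^{-1}(x-P)$) converts each $\int_{D_P}$ into $q^{-n}\int_{\mathcal{O}_K^n}$ with integrand $f_P$. The only cosmetic difference is that the paper runs the change of variables in the opposite direction, starting from $I_P:=\int_{\mathcal{O}_K^n}\chi(ac(f_P))|f_P|^s|dx|$ and arriving at $q^n Z_f(s,\chi,D_P)$.
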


\begin{proof}
By Lemma \ref{lem 2.1}, we only need to show that
\begin{equation}\label{eqno 2,1}
\sum_{P\in S(f, D)}{q^{-n}\int_{\mathcal{O}_K^n} \chi(ac(f_P(x)))|f_P(x)|^s|dx|
=Z_f(s, \chi, D_{S(f, D)})}.
\end{equation}
To prove that (\ref{eqno 2,1}) holds, for any
$P=(p_1,\cdots, p_n)\in S(f, D)$, we let
\begin{align*}
I_P:=&\int_{\mathcal{O}_K^n} \chi(ac(f_P(x)))|f_P(x)|^s|dx|\\
=&\int_{\mathcal{O}_K^n} \chi(ac(f(\pi x_1+p_1,\cdots, \pi x_n+p_n)))
|f(\pi x_1+p_1,\cdots, \pi x_n+p_n)|^s|dx_1\cdots dx_n|.
\end{align*}
We change variables of form:
$(x_1,\cdots, x_n)\mapsto\frac{1}{\pi}(y_1-p_1,\cdots, y_n-p_n)$.
Then
\begin{align*}
I_P=&\int_{D_P} \chi(ac(f(y_1,\cdots, y_n)))
|f(y_1,\cdots, y_n)|^s|\pi^{-n}||dy_1\cdots dy_n|\\
=&q^n\int_{D_P} \chi(ac(f(y_1,\cdots, y_n))
|f(y_1,\cdots, y_n)|^s|dy_1\cdots dy_n|\\
=&q^nZ_f(s, \chi, D_P).
\end{align*}
Moreover, by the definition of $D_P$, we can derive that
$D_P\bigcap D_P^{'}=\emptyset$ for $P, P^{'}\in S(f, D)$ and
$P\ne P^{'}$. Thus
\begin{align*}
\sum_{P\in S(f, D)}{q^{-n}I_P}
=&\sum_{P\in S(f, D)}{Z_f(s, \chi, D_P)}\\
=&Z_f(s, \chi, \bigcup_{P\in S(f, D)}D_P)\\
=&Z_f(s, \chi, D_{S(f, D)})
\end{align*}
as desired. This completes the proof of Lemma \ref{lem 2.2}.
\end{proof}

\subsection{\bf Newton polyhedra}
In this section, we review some results about Newton polyhedra.
One can see more details in \cite{[De3]} and \cite{[ZG2]}.

We set $\mathbb{R}_{+}:=\{x\in \mathbb{R}|x\geq 0\}$. Let
$f(x)=\sum_{l}{a_lx^l}\in K[x]$ be a polynomial in $n$ variables satisfying
$f(0)=0$, where the notation
$a_lx^l=a_{l_1, \cdots l_n}{x_1}^{l_1}\cdots {x_n}^{l_n}$, $l=(l_1, \cdots, l_n)$.
The set ${\rm supp}(f)=\{l\in \mathbb{N}^n|a_l\neq 0\}$ is called the
{\it support set} of $f$. Then we define the Newton polyhedra $\Gamma(f)$
of $f$ as the convex hull in $\mathbb{R}_{+}^n$ of the set
$$\bigcup_{l\in {\rm supp}(f)}{(l+\mathbb{R}_{+}^n)}.$$

By a proper {\it face} $\gamma$ of $\Gamma(f)$, we mean the non-empty convex set
$\gamma$ obtained by intersecting $\Gamma(f)$ with an affine hyperplane $H$,
such that $\Gamma(f)$ is contained in one of two half-plane determined by $H$.
The hyperplane $H$ is called the {\it supporting hyperplane} of $\gamma$. Let
$a_{\gamma}=(a_1, a_2,\cdots, a_n)\in \mathbb{N}^n\backslash\{0\}$
denote the vector that is perpendicular with the supporting
hyperplane $H$ and let  $|a_{\gamma}|:=\sum_{i}{a_i}$.
A face of codimension one is named {\it facet}.
Let $\langle , \rangle$ denote the usual inner product of $\mathbb{R}^n$,
and identify $\mathbb{R}^n$ with its dual by means of it.
For any $a\in \mathbb{R}^n$, we define
$$m(a):=\inf_{b\in \Gamma(f)}{\{\langle a, b\rangle \}}$$
and we set
$$\langle a_{\gamma}, x\rangle =m(a_{\gamma})$$
to be the equation of the supporting hyperplane $H$ of
a facet $\gamma$ with perpendicular vector $a_{\gamma}$.
We introduce the following concept (see, for example, {\rm Definition 1.1} of \cite{[ZG2]}).

\begin{defn} \label{defn 2.3}
A polynomial $f(x)=\sum_{i}{a_i}x^i\in K[x]$ is called {\it globally non-degenerate
with respect to its Newton polyhedra $\Gamma(f)$}, if it satisfies the following
two properties:

(GND 1) The origin of $K^n$ is a singular point of $f(x)$.

(GND 2) For every face $\gamma\subset\Gamma(f)$ (including $\Gamma(f)$ itself),
the polynomial
$$f_{\gamma}(x):=\sum_{i\in \gamma}{a_i}x^i$$
has the property that there is no $x\in (\mathbb{N}\backslash\{0\})^n$ such that
$x$ is a singular point of $f_{\gamma}$.
\end{defn}

We need also the following known result.

\begin{lem}(\cite{[ZG2]}, {\rm Theorem A})\label{lem 2.4}
Let K be a non-archimedean local field, and let $f(x)\in \mathcal{O}_K[x]$
be a polynomial globally non-degenerate with respect to its Newton polyhedra
$\Gamma(f)$. Then the Igusa's local zeta function $Z_f(s, \chi)$ is a rational
function of $q^{-s}$ satisfying that if $s$ is a pole of $Z_f(s, \chi)$, then
$$s=-\dfrac{|a_{\gamma}|}{m(a_{\gamma})}
+\dfrac{2\pi i}{\log q}\dfrac{k}{m(a_{\gamma})}, k\in \mathbb{Z}$$
for some facet $\gamma$ of $\Gamma(f)$ with perpendicular $a_{\gamma}$ if
$m(a_{\gamma})\neq 0$, and
$$s=-1+\dfrac{2\pi i}{\log q}k, k\in \mathbb{Z}$$
otherwise.
\end{lem}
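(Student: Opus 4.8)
The plan is to prove Theorem \ref{thm 1.1} by a direct application of the stationary phase formula (Lemma \ref{lem 2.2}), combined with an iterated decomposition of $\mathcal{O}_K^3$ according to the valuations of the coordinates. Since $f(x,y,z)=x^p+\alpha y^nz^l+\beta z^{n+l}$ vanishes at the origin together with all its partial derivatives (recall ${\rm char}(K)=p$, so $\partial f/\partial x=px^{p-1}=0$ and the $y$- and $z$-derivatives vanish at $(0,0,0)$), the reduced hypersurface $V_{\bar f}(\mathbb{F}_q)$ has its singular locus concentrated on the coordinate axes. The first step is to run Lemma \ref{lem 2.2} on $D=\mathcal{O}_K^3$: the nonsingular contribution produces the factor $(1-q^{-1})q^{-s}/(1-q^{-1-s})$, accounting for the pole at $s=-1$, while the remaining integral is supported on $D_{S(f,\mathcal{O}_K^3)}$, the preimage of ${\rm Sing}_{\bar f}(\mathbb{F}_q)$. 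Because $\beta$ is a $p$-th power, $\bar f = (\bar x + \bar\beta^{1/p}\bar z^{(n+l)/p})^p + \bar\alpha \bar y^n \bar z^l$ after a linear change $x\mapsto x - \beta^{1/p}z^{(n+l)/p}$ in the fiber, which shows the singular points of $\bar f$ are exactly those with $\bar y\bar z$ appropriately vanishing; one identifies ${\rm Sing}_{\bar f}(\mathbb{F}_q)$ explicitly as (essentially) the locus $\bar z = 0$, $\bar x=0$ together with $\bar y=0$, $\bar x=0$.

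The second step is to evaluate $Z_f(s,\chi,D_P)$ for each $P$ in the singular locus by substituting $(x,y,z)=(p_1+\pi x', p_2+\pi y', p_3+\pi z')$ and expanding. When $P$ lies on the locus $\bar z = \bar x = 0$ but $\bar y\neq 0$, the translated polynomial has the form $(\text{unit})\cdot z'^l + \pi(\cdots)$ times a unit from the $y$-block, reducing the relevant integral (after factoring out the unit via $\chi$) to a one-dimensional monomial integral in $z'$, which contributes the factor $(1-q^{-p-l-pls})^{-1}$ — this is the new pole at $s=-\tfrac1p-\tfrac1l$ flagged in Remark 1.5, and it is genuinely produced by the $z^l$ term, explaining its dependence on $l$. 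When $P$ lies on $\bar y = \bar x = 0$ but $\bar z\neq 0$, one gets similarly the $(1-q^{-p-n-nps})^{-1}$ factor. The delicate region is $P=(0,0,0)$: here the translated polynomial is $\pi^p x'^p + \alpha\pi^{n+l}y'^nz'^l + \beta\pi^{n+l}z'^{n+l}$ (using $p\mid n+l$, all exponents of $\pi$ are managed), and after pulling out $\pi^{\min}$ one recovers a polynomial of the same shape but with the roles of the variables rescaled. This is exactly the situation where Newton-polyhedron machinery applies: one checks $f$ is globally non-degenerate with respect to $\Gamma(f)$ (verifying (GND 1) and (GND 2) using $p\nmid l$ and $p\mid n+l$ so that the face polynomials have no singular points with all coordinates nonzero), and invokes Lemma \ref{lem 2.4}, or alternatively one sets up a recursion in the valuation of $z$ (and $y$) that telescopes.

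The third step assembles the pieces. Partitioning $\mathcal{O}_K^3$ by ${\rm ord}(z)=m\geq 0$ and ${\rm ord}(y)=j\geq 0$, on each cone the integrand becomes a monomial times a character of a unit, and the geometric series in $m$ and $j$ produce denominators $1-q^{-p-l-pls}$ (from summing over ${\rm ord}(z)$ along the $z$-axis contribution), $1-q^{-p-n-nps}$ (from ${\rm ord}(y)$), and $1-q^{-((n+l)/p+2)-(n+l)s}$ (from the diagonal direction ${\rm ord}(x)=\tfrac{n+l}{p}\,{\rm ord}(z)$ forced by the $x^p$ and $z^{n+l}$ terms having equal valuation on the relevant facet; the exponent $(n+l)/p+2$ is $|a_\gamma|$ for the facet with normal $a_\gamma=(\tfrac{n+l}{p},1,1)$, whose $m(a_\gamma)=n+l$). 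Collecting everything gives the stated product of four denominators, with $L(q^{-s},\chi)$ the resulting numerator polynomial; its polynomiality follows because each region contributes a finite geometric-type sum with polynomial numerator, and only finitely many regions contribute non-trivially before the recursion stabilizes.

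The main obstacle I anticipate is the precise bookkeeping at the origin $P=(0,0,0)$: one must carefully track which monomial achieves the minimal $\pi$-valuation on each subcone of $\mathbb{R}_+^3$, handle the interplay between the three terms $x^p$, $\alpha y^nz^l$, $\beta z^{n+l}$ (in particular the facet where $x^p$ and $z^{n+l}$ balance, versus the facet where $\alpha y^nz^l$ and $\beta z^{n+l}$ balance), and either verify global non-degeneracy cleanly enough to quote Lemma \ref{lem 2.4} or else make the valuation recursion terminate. The condition $p\mid(n+l)$ is what makes the diagonal facet have integral normal vector $(\tfrac{n+l}{p},1,1)$ and hence contribute a genuine pole at $-\tfrac1p-\tfrac2{n+l}$; the condition $p\nmid l$ ensures the $z^l$-type singularities are "as mild as possible" so that no further iteration beyond one level is needed there. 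Once the contribution of the origin is pinned down, the rest is routine geometric summation.
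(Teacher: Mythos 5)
Your proposal does not prove the statement in question. The statement is Lemma 2.4, i.e.\ Theorem A of Z\'u\~niga-Galindo \cite{[ZG2]}: a general theorem valid for an arbitrary polynomial $f\in\mathcal{O}_K[x]$ in any number of variables that is globally non-degenerate with respect to its Newton polyhedron, asserting that $Z_f(s,\chi)$ is rational in $q^{-s}$ with poles confined to $s=-\tfrac{|a_\gamma|}{m(a_\gamma)}+\tfrac{2\pi i}{\log q}\tfrac{k}{m(a_\gamma)}$ for facets $\gamma$ with $m(a_\gamma)\neq 0$, or $s=-1+\tfrac{2\pi i k}{\log q}$ otherwise. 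In the paper this lemma is quoted without proof as an external result; a proof would require the Newton-polyhedron machinery of \cite{[ZG2]}: a partition of $\mathbb{R}_+^n$ into rational simplicial cones subordinate to $\Gamma(f)$, the corresponding decomposition of $\mathcal{O}_K^n$ by valuation conditions, the evaluation of the resulting geometric series whose denominators are exactly $1-q^{-|a_\gamma|-m(a_\gamma)s}$, and an application of the stationary phase formula to the face polynomials $f_\gamma$, where the non-degeneracy hypothesis (GND 2) is what guarantees that these remaining integrals contribute no poles beyond $s=-1$. None of these ingredients appears in your text.

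What you wrote instead is an outline of the proof of Theorem 1.1 for the specific three-variable polynomial $f=x^p+\alpha y^nz^l+\beta z^{n+l}$, and in the course of it you explicitly invoke Lemma 2.4 to handle the contribution near the origin. As a proof of Lemma 2.4 this is circular, and even setting circularity aside it would at best establish a single three-variable special case, not the general statement about arbitrary globally non-degenerate polynomials in $n$ variables. The gap is therefore structural: the cone decomposition subordinate to $\Gamma(f)$, the identification of $|a_\gamma|$ and $m(a_\gamma)$ for every facet in arbitrary dimension, and the use of non-degeneracy to control the face integrals are all missing, and the argument as given cannot be adapted into a proof of the cited result.
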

Now we can show the following result.

\begin{lem}\label{lem 2.5}
Let K be a non-archimedean local field, and let
$$f(x, y)=\sum_{i=1}^{\infty}{a_ix^i}
+\sum_{j=1}^{\infty}{b_jy^j}\in \mathcal{O}_K[x, y]$$
be a two-variable polynomial globally non-degenerate with respect to its Newton polyhedra
$\Gamma(f)$. Let $i_0:=\min\{i|a_i\neq 0\}$ and $j_0:=\min\{j|b_j\neq 0\}$. Then
$$Z_f(s, \chi)=\dfrac{H(q^{-s}, \chi)}{(1-q^{-1-s})(1-q^{-i_0-j_0-i_0j_0s})},$$
where $H(q^{-s}, \chi)$ is a polynomial with complex coefficients.
\end{lem}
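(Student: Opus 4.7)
The plan is to apply Lemma \ref{lem 2.4} directly: since global non-degeneracy with respect to $\Gamma(f)$ is part of the hypothesis, the only work is to identify the facets of $\Gamma(f)$, compute for each facet the perpendicular vector $a_\gamma$ and the number $m(a_\gamma)$, and then match the resulting list of candidate poles against the two factors in the claimed denominator. Because the monomials of $f$ live only on the two coordinate axes of $\mathbb{R}_+^2$, the polyhedron $\Gamma(f)$ is as simple as possible and the enumeration is short.

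First I would describe $\Gamma(f)$ explicitly. Since $\mathrm{supp}(f)\subset\{(i,0):i\ge i_0\}\cup\{(0,j):j\ge j_0\}$, the Newton polyhedron is the region in $\mathbb{R}_+^2$ lying above the polygonal curve that runs horizontally from $(+\infty,0)$ to $(i_0,0)$, then along the segment to $(0,j_0)$, then vertically from $(0,j_0)$ to $(0,+\infty)$. Thus $\Gamma(f)$ has exactly three facets:
\begin{enumerate}
\item $\gamma_1=\{(i,0):i\ge i_0\}$, with perpendicular $a_{\gamma_1}=(0,1)$, so $|a_{\gamma_1}|=1$ and $m(a_{\gamma_1})=0$;
\item $\gamma_2=\{(0,j):j\ge j_0\}$, with perpendicular $a_{\gamma_2}=(1,0)$, so $|a_{\gamma_2}|=1$ and $m(a_{\gamma_2})=0$;
\item $\gamma_3=$ the segment from $(i_0,0)$ to $(0,j_0)$, whose supporting hyperplane has equation $j_0 x+i_0 y=i_0 j_0$, giving $a_{\gamma_3}=(j_0,i_0)$, $|a_{\gamma_3}|=i_0+j_0$, and $m(a_{\gamma_3})=i_0 j_0$.
\end{enumerate}
Applying Lemma \ref{lem 2.4}, the facets $\gamma_1$ and $\gamma_2$ (where $m(a_\gamma)=0$) yield candidate poles at $s=-1+2\pi i\mathbb{Z}/\log q$, exactly the zeros of $1-q^{-1-s}$, while $\gamma_3$ yields candidate poles at
$$s=-\frac{i_0+j_0}{i_0 j_0}+\frac{2\pi i k}{i_0 j_0\log q},\qquad k\in\mathbb{Z},$$
exactly the zeros of $1-q^{-i_0-j_0-i_0 j_0 s}$. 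Since $Z_f(s,\chi)$ is rational in $q^{-s}$ with all poles in the union of these two loci, multiplying by $(1-q^{-1-s})(1-q^{-i_0-j_0-i_0 j_0 s})$ produces the polynomial $H(q^{-s},\chi)$ claimed in the statement.

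The one point that requires care is pole order: Lemma \ref{lem 2.4} a priori locates only the poles, not their multiplicities, so in principle one might fear that the denominator needs to carry higher powers of these factors. This is handled by invoking the stronger, explicit-formula form of Theorem A of \cite{[ZG2]}, in which the denominator of $Z_f(s,\chi)$ is exactly the product $\prod_{\gamma}(1-q^{-|a_\gamma|-m(a_\gamma)s})$ over the facets of $\Gamma(f)$ with $m(a_\gamma)\ne 0$, together with a single factor $1-q^{-1-s}$ arising from the $m=0$ facets. In our situation this product is precisely $(1-q^{-1-s})(1-q^{-i_0-j_0-i_0 j_0 s})$. Everything else is straightforward bookkeeping, and this multiplicity issue is the only substantive obstacle.
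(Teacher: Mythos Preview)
Your proposal is correct and follows essentially the same approach as the paper: identify the three facets of $\Gamma(f)$, compute $a_\gamma$ and $m(a_\gamma)$ for each, and invoke Lemma~\ref{lem 2.4}. Your additional paragraph on pole multiplicities is a valid concern that the paper's own proof leaves implicit; the paper simply reads off the candidate poles from Lemma~\ref{lem 2.4} and asserts the claimed denominator without further comment.
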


\begin{proof}
By the definition of $i_0$ and $j_0$, we know that $\Gamma(f)$
is the convex hull in $\mathbb{R}_{+}^2$ of the set
$$((i_0, 0)+\mathbb{R}_{+}^2)\bigcup ((0, j_0)+\mathbb{R}_{+}^2).$$

Let $\gamma$ be a facet of $\Gamma(f)$. Then
$\gamma=\{(x, y)\in\mathbb{R}^2|x=0, y\geq j_0\}$, or
$\gamma=\{(x, y)\in\mathbb{R}^2|x\geq i_0, y=0\}$,
or $\gamma=\{(x, y)\in\mathbb{R}^2
|j_0x+i_0y=i_0j_0,\ 0\leq x\leq i_0,\ 0\leq y\leq j_0\}$.

In the first two cases,  the supporting hyperplane of $\gamma$ are
$\{(x, y)\in\mathbb{R}^2|x=0\}$ and $\{(x, y)\in\mathbb{R}^2|y=0\}$.
Thus one has $a_{\gamma}=(1, 0)$ or $a_{\gamma}=(0, 1)$. So
$m(a_{\gamma})=0$ in both situations.

In the third case, one has $\gamma=\{(x, y)\in\mathbb{R}^2
|j_0x+i_0y=i_0j_0,\ 0\leq x\leq i_0,\ 0\leq y\leq j_0\}$.
Then the supporting hyperplane of $\gamma$ is
$H=\{(x, y)\in\mathbb{R}^2|j_0x+i_0y=i_0j_0\}$, with perpendicular
vector $a_{\gamma}=(j_0, i_0)$. Furthermore, one has
\begin{align*}
m(a_{\gamma})=& \inf_{(x,y)\in \Gamma(f)}{\{\langle a_{\gamma}, (x, y)\rangle \}}\\
=& \inf_{(x,y)\in \Gamma(f)}\{j_0x+i_0y\}\\
=& \inf_{(x,y)\in \Gamma(f)}\Big\{i_0j_0\Big(\frac{x}{i_0}+\frac{y}{j_0}\Big)\Big\}\\
=& i_0j_0\inf_{(x,y)\in \Gamma(f)}\Big\{\frac{x}{i_0}+\frac{y}{j_0}\Big\}\\
=& i_0j_0,
\end{align*}
the last equality is because $\frac{x}{i_0}+\frac{y}{j_0}\ge 1$
when the point $(x, y)$ runs through $\Gamma(f)$ and
$\frac{x}{i_0}+\frac{y}{j_0}=1$ if $(x, y)\in H$.

Now by using Lemma \ref{lem 2.4}, we know that the local zeta function
of $f$ is a rational function of $q^{-s}$, and its only possible poles are
$$s=-\dfrac{i_0+j_0}{i_0j_0}
+\dfrac{2\pi i}{\log q}\dfrac{k}{i_0j_0}, k\in \mathbb{Z},$$
or
$$s=-1+\dfrac{2\pi i}{\log q}k, k\in \mathbb{Z}.$$

This concludes the proof of Lemma \ref{lem 2.5}.
\end{proof}

\subsection{\bf Some lemmas}
In this section, we show some lemmas which will be used in
the proof of our main theorem.

\begin{lem}\label{lem 2.6}
Let $\chi: \mathcal{O}_K^{\times}\rightarrow \mathbb{C}^{\times}$
be a multiplicative character of $\mathcal{O}_K^{\times}$, and
$m$ be any positive integer. Then
\begin{align}
\int_{\mathcal{O}_K^{\times}}{\chi(ac (x^m))|dx|}
={\left\{\begin{array}{rl}
&1-q^{-1},\ \  \ {\rm if}\ \chi^m=\chi_{{\rm triv}},\nonumber\\
&0,\ \ \ \ \ \ \ \ \ \ \ {\rm if}\ \chi^m\neq \chi_{{\rm triv}}.\nonumber
\end{array}\right.}
\end{align}
\end{lem}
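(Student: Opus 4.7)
The plan is to reduce the integral to an integral of a multiplicative character over $\mathcal{O}_K^\times$ and then invoke a standard orthogonality/translation-invariance argument.

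First I would observe that for every $x\in\mathcal{O}_K^\times$ one has ${\rm ord}(x)=0$, so by the definition of the angular component, $ac(x)=x\pi^{0}=x$, and hence $ac(x^m)=x^m$. Consequently $\chi(ac(x^m))=\chi(x^m)=\chi^m(x)$, and the integral in question simplifies to
$$\int_{\mathcal{O}_K^\times}\chi^m(x)\,|dx|.$$
From here the two cases of the statement correspond to whether $\chi^m$ is trivial on $\mathcal{O}_K^\times$ or not.

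In the case $\chi^m=\chi_{\rm triv}$, the integrand equals $1$ on $\mathcal{O}_K^\times$, so the integral equals the Haar measure of $\mathcal{O}_K^\times=\mathcal{O}_K\setminus\pi\mathcal{O}_K$, which is $1-q^{-1}$ by the normalization of $|dx|$ and the fact that $\pi\mathcal{O}_K$ has measure $q^{-1}$.

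In the case $\chi^m\neq\chi_{\rm triv}$, I would use the translation invariance of the Haar measure on the compact group $\mathcal{O}_K^\times$. Pick $y\in\mathcal{O}_K^\times$ with $\chi^m(y)\neq 1$, which exists by nontriviality. Applying the change of variables $x\mapsto xy^{-1}$ (whose Jacobian has absolute value $1$) yields
$$\int_{\mathcal{O}_K^\times}\chi^m(x)\,|dx|=\int_{\mathcal{O}_K^\times}\chi^m(xy^{-1})\,|dx|=\chi^m(y^{-1})\int_{\mathcal{O}_K^\times}\chi^m(x)\,|dx|,$$
so $(1-\chi^m(y^{-1}))\int_{\mathcal{O}_K^\times}\chi^m(x)\,|dx|=0$, forcing the integral to vanish. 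Nothing here is genuinely hard; the only small subtlety is checking that $\chi^m$ is again a multiplicative character of $\mathcal{O}_K^\times$ so that the orthogonality argument applies, which is immediate from the homomorphism property of $\chi$.
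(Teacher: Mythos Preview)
Your proof is correct and follows essentially the same approach as the paper: both reduce $\chi(ac(x^m))$ to $\chi^m(x)$ on $\mathcal{O}_K^\times$, compute the measure of $\mathcal{O}_K^\times$ in the trivial case, and use a multiplicative translation by an element where $\chi^m$ is nontrivial to force vanishing in the other case. The only cosmetic difference is that the paper translates by $x\mapsto ax$ while you use $x\mapsto xy^{-1}$, which is immaterial.
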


\begin{proof} We divide the proof into the following two cases:

{\sc Case 1.} $\chi^m=\chi_{{\rm triv}}$. Since $ac$ is a
multiplicative function and $\mathcal{O}_k=
\pi\mathcal{O}_K\cup\mathcal{O}_K^{\times}$, we have
\begin{align*}
\int_{\mathcal{O}_K^{\times}}{\chi(ac (x^m))|dx|}
&=\int_{\mathcal{O}_K^{\times}}{\chi^m(ac (x))|dx|}\\
&=\int_{\mathcal{O}_K^{\times}}{|dx|}\\
&=\int_{\mathcal{O}_K}{|dx|}-\int_{\pi\mathcal{O}_K}{|dx|}\\
&=\int_{\mathcal{O}_K}{|dx|}-q^{-1}\int_{\mathcal{O}_K}{|dx|}\\
&=1-q^{-1}
\end{align*}
since $|dx|$ is the Haar measure on $K$, normalized such that the
measure of $\mathcal{O}_K$ is one.

{\sc Case 2.} $\chi^m\neq \chi_{{\rm triv}}$. Then there
exists one $a\in \mathcal{O}_K^{\times}$ such that $\chi^m(a)\neq 1$.
Since $ac(a)=a\pi^{-{\rm ord}(a)}=a\pi^{-0}=a$ for
$a\in \mathcal{O}_K^{\times}$, one deduces that
\begin{align*}
\chi^m(a)\int_{\mathcal{O}_K^{\times}}{\chi(ac (x^m))|dx|}
&=\chi(ac (a^m))\int_{\mathcal{O}_K^{\times}}{\chi(ac (x^m))|dx|}\\
&=\int_{\mathcal{O}_K^{\times}}{\chi(ac ((ax)^m))|dx|}\\
&=\int_{\mathcal{O}_K^{\times}}{\chi(ac (x^m))|dx|}.
\end{align*}
But $\chi^m(a)\neq 1$. Hence
$$\int_{\mathcal{O}_K^{\times}}{\chi(ac (x^m))|dx|}=0$$
as desired. So Lemma \ref{lem 2.6} is proved.
\end{proof}
The following lemma is a generalization of Lemma 2 of \cite{[LIS]}.

\begin{lem}\label{lem 2.7}
Let $f(x, y, z)=x^p+\alpha\pi^{n}y^nz^l+\beta z^{n+l}$,
where $\alpha\in \mathcal{O}_K^{\times}$,
$\beta^{1/p}\in\mathcal{O}_K^{\times}$,
$l,\ n$ are positive integers such that $p\mid (n+l)$ and
$p\nmid l$. Then
\begin{align*}
Z_f(s, \chi, D)={\left\{\begin{array}{rl}
\dfrac{M(q^{-s}, \chi)}{(1-q^{-1-s})(1-q^{-p-n-pns})},
&{\rm if}\ \chi^{n+l}=\chi_{{\rm triv}},\\
0, &{\rm if}\ \chi^{n+l}\neq \chi_{{\rm triv}},
\end{array}\right.}
\end{align*}
where $D=\mathcal{O}_K^{\times}\times\mathcal{O}_K\times\mathcal{O}_K^{\times}$
and $M(q^{-s}, \chi)$ is a polynomial with complex coefficients.
\end{lem}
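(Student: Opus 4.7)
The plan is to combine a scaling symmetry of $f$ under units in $\mathcal{O}_K$ with the Frobenius-based change of variable $u = x + \beta_1 z^m$, where $\beta_1 := \beta^{1/p} \in \mathcal{O}_K^\times$ and $m := (n+l)/p \in \mathbb{Z}_{>0}$, thereby reducing the three-variable integral to a two-variable one that falls within the scope of Lemma \ref{lem 2.5}.

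The vanishing statement is the easier half. For each $\zeta \in \mathcal{O}_K^\times$, the substitution $(x, y, z) \mapsto (\zeta^m x, \zeta y, \zeta z)$ preserves both $D$ and $|dx\,dy\,dz|$, and since $pm = n+l$ each of the three monomials of $f$ rescales by exactly $\zeta^{n+l}$, giving $f(\zeta^m x, \zeta y, \zeta z) = \zeta^{n+l} f(x, y, z)$. Hence $Z_f(s, \chi, D) = \chi^{n+l}(\zeta)\, Z_f(s, \chi, D)$ for every $\zeta \in \mathcal{O}_K^\times$; if $\chi^{n+l} \ne \chi_{{\rm triv}}$ we may choose $\zeta$ with $\chi^{n+l}(\zeta) \ne 1$ to conclude $Z_f(s, \chi, D) = 0$.

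Assume from now on $\chi^{n+l} = \chi_{{\rm triv}}$. Since ${\rm char}(K) = p$, the Frobenius identity gives $\beta z^{n+l} = (\beta_1 z^m)^p$, and the measure-preserving substitution $x \mapsto u - \beta_1 z^m$ turns $f$ into $\tilde f(u, y, z) = u^p + \alpha\pi^n y^n z^l$, while $D$ becomes $D' = \{(u, y, z) \in \mathcal{O}_K \times \mathcal{O}_K \times \mathcal{O}_K^\times : \bar u \ne \bar\beta_1 \bar z^m\}$. I would integrate first over $z \in \mathcal{O}_K^\times$: for each fixed $z$ the inner integral in $(u, y)$ splits as the unrestricted integral on $\mathcal{O}_K^2$ minus the excluded-disk integral on $\{\bar u = \bar\beta_1 \bar z^m\} \times \mathcal{O}_K$. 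The unrestricted part is the local zeta function of the two-variable polynomial $g_z(u, y) = u^p + (\alpha z^l)\pi^n y^n$; its Newton polyhedron has vertices $(p, 0)$ and $(0, n)$, and the conditions $p \nmid n$ (forced by $p \nmid l$ and $p \mid n+l$) together with $\alpha z^l \in \mathcal{O}_K^\times$ ensure global non-degeneracy in the sense of Definition \ref{defn 2.3}, so Lemma \ref{lem 2.5} supplies the denominator $(1 - q^{-1-s})(1 - q^{-p-n-pns})$. For the excluded-disk correction, substitute $u = \beta_1 z^m + \pi u'$; the resulting integrand is a unit whose angular component is a small perturbation of $\beta z^{n+l}$, so the correction contributes only a polynomial in $q^{-s}$. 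Finally, integrating over $z \in \mathcal{O}_K^\times$, the hypothesis $\chi^{n+l} = \chi_{{\rm triv}}$ collapses every $z$-dependent character factor of the shape $\chi(\beta z^{n+l}) = \chi(\beta)\chi^{n+l}(z) = \chi(\beta)$, and Lemma \ref{lem 2.6} assembles the output into the claimed rational form.

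The main obstacle is the bookkeeping of the $z$-dependence through the two substitutions and the $z$-integration: one must confirm that the numerator produced by Lemma \ref{lem 2.5} as well as the excluded-disk correction integrate over $z \in \mathcal{O}_K^\times$ to a polynomial in $q^{-s}$ without reintroducing any denominator factors, a step whose success rests squarely on the assumption $\chi^{n+l} = \chi_{{\rm triv}}$. A secondary technical point is the verification of global non-degeneracy for $g_z$ uniformly in $z$; in the edge case $n = 1$, where the origin fails to be a singular point of $g_z$ and condition (GND 1) does not literally hold, a direct stationary-phase calculation replaces the invocation of Lemma \ref{lem 2.5} but yields the same denominator.
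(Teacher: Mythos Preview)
Your argument is correct and uses the same two ingredients as the paper—the weighted homogeneity $f(\zeta^m x,\zeta y,\zeta z)=\zeta^{n+l}f(x,y,z)$ and the Frobenius shift involving $\beta^{1/p}z^m$—but applies them in the reverse order. The paper performs the scaling first, via the change of variables $(x,y,z)\mapsto(uw^{m},vw,w)$, which sends $D$ bijectively to $(\mathcal{O}_K^\times\times\mathcal{O}_K)\times\mathcal{O}_K^\times$ and factors $f=w^{n+l}g(u,v)$ with $g(u,v)=u^p+\alpha\pi^n v^n+\beta$; the $w$-integral then separates off by Lemma~\ref{lem 2.6}, yielding both the vanishing case and the factor $(1-q^{-1})$ in one stroke. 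Only afterwards does the paper apply the stationary phase formula (Lemma~\ref{lem 2.2}) to the two-variable $g$ on $\mathcal{O}_K^\times\times\mathcal{O}_K$: the sole singular coset is $\bar u=-\bar\beta^{1/p}$, and the shift $u\mapsto-\beta^{1/p}+\pi x$ produces $g_1=\pi^p x^p+\alpha\pi^n y^n$ on $\mathcal{O}_K^2$, to which Lemma~\ref{lem 2.5} applies directly.

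The advantage of the paper's ordering is that it dissolves precisely the obstacle you flag. In your route the main term $Z_{g_z}(s,\chi)$ carries its $z$-dependence through the \emph{coefficient} $\alpha z^l$, not through a character value of the shape $\chi(\beta z^{n+l})$, so the collapse you describe does not literally apply to it. The clean fix is to insert the scaling $(u,y)\mapsto(u'z^{m},y'z)$ inside your $z$-integral, which gives $Z_{g_z}(s,\chi)=\chi^{n+l}(z)\,Z_h(s,\chi)$ with $h(u',y')=(u')^p+\alpha\pi^n(y')^n$ independent of $z$; but that is exactly the paper's first substitution, deferred. With this adjustment your argument is complete and equivalent to the paper's.
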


\begin{proof}
For $Z_f(s, \chi, D)$, we change variables as
$(x, y, z)\mapsto (uw^{(n+l)/p}, vw, w)$.
Since $|w|=1$ for $w\in \mathcal{O}_K^{\times}$, we have
\begin{align*}
& Z_f(s, \chi, D)\\
=&\int_D\chi(ac (x^p+\alpha\pi^ny^nz^l+\beta z^{n+l}))
|x^p+\alpha\pi^ny^nz^l+\beta z^{n+l}|^s|dxdydz|\\
=&\int_D\chi(ac(w^{n+l}(u^p+\alpha\pi^nv^n+\beta)))
|w^{n+l}(u^p+\alpha\pi^nv^n+\beta)|^s|w^{\frac{n+l}{p}+1}||dudvdw|\\
=&\int_{\mathcal{O}_K^{\times}\times\mathcal{O}_K}
\chi(ac(u^p+\alpha\pi^nv^n+\beta))
|(u^p+\alpha\pi^nv^n+\beta)|^s|dudv|
\int_{\mathcal{O}_K^{\times}}\chi (ac(w^{n+l}))|dw|\\
=& Z_g(s, \chi, D^{'}) \int_{\mathcal{O}_K^{\times}}\chi (ac(w^{n+l}))|dw|,
\end{align*}
where $g(u, v):=u^p+\alpha\pi^nv^n+\beta$ and
$D^{'}:=\mathcal{O}_K^{\times}\times\mathcal{O}_K$.
Then by Lemma \ref{lem 2.6}, we obtain that
\begin{align*}
Z_f(s, \chi, D)={\left\{\begin{array}{rl}
(1-q^{-1})Z_g(s, \chi, D^{'}),
& {\rm if} \ \ \chi^{n+l}=\chi_{{\rm triv}},\\
0, & {\rm if} \ \ \chi^{n+l}\neq \chi_{{\rm triv}}.
\end{array}\right.}
\end{align*}

Let now $\chi^{n+l}=\chi_{{\rm triv}}$.
To prove the lemma, we need to calculate $Z_g(s, \chi, D^{'})$.
Since $\beta^{1/p}\in\mathcal{O}_K^{\times}$,
one has $\bar{g}(u, v)=u^p+\bar{\beta}$ with
$\bar{\beta}^{1/p}\in\mathbb{F}_q^{\times}$.
Thus we derive that
${\rm Sing}_{\bar{g}}(\mathbb{F}_q)\bigcap \bar{D^{'}}
=\{(u, v)\in \mathbb{F}_q^2|u=-\bar{\beta}^{1/p}\}$,
which implies that
$S(g, D^{'})=\{(u,v )\in R^{2}|u=-\beta^{1/p}\}$.
Hence
$$D_{S(g, D^{'})}=\bigcup_{P\in S(g, D^{'})}D_P
=(-\beta^{1/p}+\pi\mathcal{O}_K)\times \mathcal{O}_K.$$

On the other hand, Lemma \ref{lem 2.2} tells us that
\begin{equation}\label{eqno 2.2}
Z_g(s, \chi, D^{'})=v(\bar{g}, D^{'}, \chi)
+\sigma(\bar{g}, D^{'}, \chi)\dfrac{(1-q^{-1})q^{-s}}{1-q^{-1-s}}
+Z_g(s, \chi, D_{S(g, D^{'})}).
\end{equation}
For $Z_g(s, \chi, D_{S(g, D^{'})})$, we make the change of variables
of the form $(u, v)\mapsto (-\beta^{1/p}+\pi x, y)$ to get that
\begin{align}\label{2.3}
& Z_g(s, \chi, D_{S(g, D^{'})})\nonumber\\
=&\int_{D_{S(g, D^{'})}}
\chi(ac(u^p+\alpha\pi^nv^n+\beta))
|u^p+\alpha\pi^nv^n+\beta|^s|dudv|\nonumber\\
=&\int_{\mathcal{O}_K^2}
\chi(ac((-\beta^{1/p}+\pi x)^p+\alpha\pi^ny^n+\beta))
|(-\beta^{1/p}+\pi x)^p+\alpha\pi^ny^n+\beta|^s|\pi||dxdy|\nonumber\\
=&q^{-1}\int_{\mathcal{O}_K^2}
\chi(ac(\pi^p x^p+\alpha\pi^ny^n))
|\pi^p x^p+\alpha\pi^ny^n|^s|dxdy|\nonumber\\
=& q^{-1}Z_{g_1}(s, \chi),
\end{align}
the third equality is due to the reason that $p$ is the characteristic of $K$.

Now let $g_1(u, v)=\pi^p x^p+\alpha\pi^ny^n$. It is easy to check that
$g_1$ is a polynomial globally non-degenerate with respect to its
Newton polyhedra. Using Lemma \ref{lem 2.5}, we arrive at
\begin{align}\label{2.4}
Z_{g_1}(s, \chi)=\dfrac{G(q^{-s}, \chi)}{(1-q^{-1-s})(1-q^{-p-n-pns})},
\end{align}
with $G(q^{-s}, \chi)$ being a polynomial of complex coefficients.
From (\ref{eqno 2.2}), (\ref{2.3}) and (\ref{2.4}), it then follows
immediately that
\begin{equation*}
Z_f(s, \chi, D)=(1-q^{-1})Z_g(s, \chi, D^{'})
=\dfrac{M(q^{-s}, \chi)}{(1-q^{-1-s})(1-q^{-p-n-pns})},
\end{equation*}
where $M(q^{-s}, \chi)$ is a polynomial with complex coefficients.

The proof of Lemma \ref{lem 2.7} is complete.
\end{proof}

\begin{lem}\label{lem 2.8}
Let $f(x, y, z)=\pi^{m}x^p+\alpha\pi^{l}y^nz^l+\beta\pi^{n+l}z^{n+l}$,
with $\alpha\in \mathcal{O}_K^{\times}$,
$\beta^{1/p}\in\mathcal{O}_K^{\times}$ and $m$ being a nonnegative
integer, $l$ and $n$ being the positive integers such that $p\mid (n+l)$
and $p\nmid l$. Then
\begin{align}
Z_f(s, \chi, D)
={\left\{\begin{array}{rl}
&H_1(q^{-s}, \chi),\ \ \ \ \ \  if \ 0\le m<l,\nonumber\\
&\dfrac{H_2(q^{-s}, \chi)}{1-q^{-1-s}},\ \ \ \ \  otherwise, \nonumber
\end{array}\right.}
\end{align}
where $D=(\mathcal{O}_K^{\times})^2\times\mathcal{O}_K$ and
$H_i(q^{-s}, \chi)$ are polynomials with complex coefficients.
\end{lem}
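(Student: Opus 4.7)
The plan is to decompose $D$ by the $\pi$-adic valuation of $z$ and apply Lemma \ref{lem 2.2} on each resulting piece. Writing $\mathcal{O}_K=\{0\}\sqcup\bigsqcup_{j\ge 0}\pi^j\mathcal{O}_K^{\times}$ (the origin has measure zero), substituting $z=\pi^j z'$ on the $j$-th slice and absorbing the Jacobian $q^{-j}$ yields
\begin{equation*}
Z_f(s,\chi,D)=\sum_{j=0}^{\infty}q^{-j}I_j,\qquad I_j:=\int_{(\mathcal{O}_K^{\times})^3}\chi(ac(f_j))|f_j|^s|dxdydz'|,
\end{equation*}
where $f_j(x,y,z'):=\pi^m x^p+\alpha\pi^{l(j+1)}y^n z'^l+\beta\pi^{(n+l)(j+1)}z'^{n+l}$. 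Setting $\mu_j:=\min(m,l(j+1))$ and factoring $f_j=\pi^{\mu_j}g_j$ gives $I_j=q^{-\mu_j s}Z_{g_j}(s,\chi,(\mathcal{O}_K^{\times})^3)$, and we evaluate each inner integral by applying Lemma \ref{lem 2.2} on $(\mathcal{O}_K^{\times})^3$.

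The stationary-phase analysis splits into three cases according to how $m$ compares with $l(j+1)$. When $m\neq l(j+1)$, the reduction $\bar{g_j}$ equals either $\alpha\bar y^n\bar z'^l$ (in the subcase $m>l(j+1)$) or $\bar x^p$ (in the subcase $m<l(j+1)$), each a unit on $\bar D':=(\mathbb{F}_q^{\times})^3$. Hence $V_{\bar{g_j}}\cap\bar D'=\emptyset$, the singular set is empty, and Lemma \ref{lem 2.2} gives the constant $Z_{g_j}(s,\chi,(\mathcal{O}_K^{\times})^3)=v(\bar{g_j},(\mathcal{O}_K^{\times})^3,\chi)$. When $m=l(j+1)$ -- which can only happen if $l\mid m$, and then only at the single index $j_0:=m/l-1$ -- one has $\bar{g_{j_0}}=\bar x^p+\alpha\bar y^n\bar z'^l$. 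The hypothesis $p\nmid l$ together with its consequence $p\nmid n$ (from $p\mid n+l$) makes both $\partial_y\bar{g_{j_0}}$ and $\partial_{z'}\bar{g_{j_0}}$ nonzero on $(\mathbb{F}_q^{\times})^3$, so the zero locus of $\bar{g_{j_0}}$ on $\bar D'$ is entirely nonsingular, $S(g_{j_0},(\mathcal{O}_K^{\times})^3)=\emptyset$, and Lemma \ref{lem 2.2} yields
\begin{equation*}
Z_{g_{j_0}}(s,\chi,(\mathcal{O}_K^{\times})^3)=v+\sigma\cdot\frac{(1-q^{-1})q^{-s}}{1-q^{-1-s}},
\end{equation*}
which is exactly where the denominator $1-q^{-1-s}$ enters.

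Summing over $j$ proceeds as follows: the finitely many $j$ with $l(j+1)<m$ contribute $\sum q^{-j-l(j+1)s}\cdot(\text{constant})$, a polynomial in $q^{-s}$; the infinitely many $j$ with $l(j+1)>m$ all carry the monomial factor $q^{-ms}$, and the character integrals stabilize once $l(j+1)-m$ exceeds the conductor $c_\chi$ (so that $g_j\equiv x^p\pmod{\pi^{c_\chi}}$ and $\chi(ac(g_j))=\chi(x^p)$), making the tail a convergent geometric series in $q^{-1}$; the exceptional term at $j=j_0$, when present, contributes a single summand with denominator $1-q^{-1-s}$. Consequently, when $m<l$ only the subcase $l(j+1)>m$ can arise (for every $j\ge 0$) and $Z_f(s,\chi,D)$ is a polynomial $H_1(q^{-s},\chi)$. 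When $m\ge l$ the contributions assemble to $H_2(q^{-s},\chi)/(1-q^{-1-s})$, where $H_2$ simply absorbs a compensating factor $1-q^{-1-s}$ in the case $l\nmid m$ (for which the exceptional term never occurs and $Z_f$ is actually a polynomial).

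The main obstacle is the nonsingularity check in the exceptional case $m=l(j+1)$: verifying smoothness of the hypersurface $\bar x^p+\alpha\bar y^n\bar z'^l=0$ on the torus $(\mathbb{F}_q^{\times})^3$ is precisely where both characteristic-$p$ hypotheses $p\nmid l$ and the derived $p\nmid n$ are decisive. A secondary bookkeeping point is the stabilization of the tail character integrals; the finiteness of $c_\chi$, combined with a short computation in the spirit of Lemma \ref{lem 2.6}, handles this routinely.
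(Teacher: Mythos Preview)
Your proof is correct and rests on the same underlying idea as the paper's---slicing according to ${\rm ord}(z)$---but the two arguments are organized differently. The paper proceeds by \emph{iterated} stationary phase: for $m\ge l$ it pulls out $\pi^l$, applies Lemma~\ref{lem 2.2} once, finds the singular locus to be $(\mathcal{O}_K^{\times})^2\times\pi\mathcal{O}_K$, substitutes $z\mapsto\pi z_1$ there, and obtains an integral over $D$ of the same shape but with $x^p$-coefficient $\pi^{m-l}$; repeating this $d$ times (where $m=dl+r$, $0\le r<l$) reduces to the range $0\le r<l$, which is then dispatched in one line by the observation $|f/\pi^r|=1$ on $D$.

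Your argument unrolls that recursion in advance: the decomposition $\mathcal{O}_K=\bigsqcup_{j\ge 0}\pi^j\mathcal{O}_K^{\times}$ is exactly what the iteration produces, and your trichotomy $l(j+1)\lessgtr m$ matches the paper's case split (with your exceptional index $j_0$ corresponding to the paper's Subcase~2.1). The trade-off is that the paper's recursion terminates after finitely many steps and never sums an infinite series, whereas your direct sum requires the tail-stabilization argument via the conductor $c_\chi$. Both are sound; the paper's version is marginally more elementary, while yours makes the contribution of each valuation of $z$ visible from the outset and isolates more cleanly the single term at $j_0$ that is responsible for the denominator $1-q^{-1-s}$.
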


\begin{proof}
We prove this lemma by considering the following two cases.

{\sc Case 1.} $0\le m<l$. Then one has
$$ac(\pi ^m)=\pi^m\cdot \pi^{-{\rm ord}(\pi^m)}
=\pi^m\cdot \pi^{-m{\rm ord}(\pi)}=\pi^{m+(-m)}=1$$
and
$$
|\pi^m|=q^{-{\rm ord}(\pi^m)}=q^{-m{\rm ord}(\pi)}=q^{-m}.
$$
It follows that
\begin{align*}
&Z_f(s, \chi, D)\\
=&\int_D\chi(ac (\pi^mx^p+\alpha\pi^ly^nz^l+\beta\pi^{n+l}z^{n+l}))
|\pi^mx^p+\alpha\pi^ly^nz^l+\beta\pi^{n+l}z^{n+l}|^s|dxdydz|\\
=&q^{-ms}\times\\
&\int_D\chi(ac (x^p+\alpha\pi^{l-m}y^nz^l+\beta\pi^{n+l-m}z^{n+l}))
|x^p+\alpha\pi^{l-m}y^nz^l+\beta\pi^{n+l-m}z^{n+l}|^s|dxdydz|.
\end{align*}

Since $l>m$, one has $\pi|(\alpha\pi^{l-m}y^nz^l+\beta\pi^{n+l-m}z^{n+l})$.
But $x\in \mathcal{O}_K^{\times}$. Then we have
$\pi \nmid (x^p+\alpha\pi^{l-m}y^nz^l+\beta\pi^{n+l-m}z^{n+l})$
which implies that
$${\rm ord}(x^p+\alpha\pi^{l-m}y^nz^l+\beta\pi^{n+l-m}z^{n+l})=0.$$
Hence
$|x^p+\alpha\pi^{l-m}y^nz^l+\beta\pi^{n+l-m}z^{n+l}|=1$
if $(x, y, z)\in D$. Thus
\begin{equation*}
Z_f(s, \chi, D)=q^{-ms}\int_D
\chi(ac (x^p+\alpha\pi^{l-m}y^nz^l+\beta\pi^{n+l-m}z^{n+l}))|dxdydz|
:=H_1(q^{-s}, \chi).
\end{equation*}
So Lemma 2.8 is proved in this case.

{\sc Case 2.} $m\geq l$. Then we have
\begin{align*}
&Z_f(s, \chi, D)\\
=& q^{-ls}\int_D
\chi(ac (\pi^{m-l}x^p+\alpha y^nz^l+\beta\pi^nz^{n+l}))
|\pi^{m-l}x^p+\alpha y^nz^l+\beta\pi^nz^{n+l}|^s|dxdydz|.
\end{align*}

{\sc Subcase 2.1.} $m=l$. Let
$h_1(x, y, z)=x^p+\alpha y^nz^l+\beta\pi^nz^{n+l}$. Then
$\bar{h}_1=x^p+\bar{\alpha}y^nz^l$ and
$\bar{D}=(\mathbb{F}_q^{\times})^2\times \mathbb{F}_q$.
If $P=(x, y, z)\in {\rm Sing}_{\bar{h}_1}(\mathbb{F}_q)$,
then
$$\bar{h}_1(P)=\frac{\partial \bar{h}_1}{\partial x}(P)
=\frac{\partial \bar{h}_1}{\partial y}(P)
=\frac{\partial \bar{h}_1}{\partial z}(P)=\bar{0}.$$
But $p\nmid n$ since $p\nmid l$ and $p\mid (n+l)$.
Thus letting $\frac{\partial \bar{h}_1}{\partial y}(P)
=\bar{\alpha}ny^{n-1}z^l=\bar{0}$ gives us that
$zy=\bar{0}$. Then one can deduce from
$\bar{h}_1(P)=\bar{0}$ that $x=\bar{0}$. So it follows that
${\rm Sing}_{\bar{h}_1}(\mathbb{F}_q)\bigcap \bar{D}=\emptyset$,
which tells us that $S(h_1, D)=\emptyset$. Using Lemma
\ref{lem 2.1}, we have
\begin{equation}\label{eqno 2.5}
Z_f(s, \chi, D)=q^{-ls}Z_{h_1}(s, \chi, D)=
q^{-ls}\Big(c_1(\chi )+c_2(\chi )\frac{(1-q^{-1})q^{-s}}{1-q^{-1-s}}\Big)
:=\dfrac{H_{2, 1}(q^{-s}, \chi)}{1-q^{-1-s}}
\end{equation}
as one desires, where $c_1(\chi)$ and $c_2(\chi)$ are constants
depended on $\chi $ and $h_1$. So Lemma 2.8 is true in this subcase.

{\sc Subcase 2.2.} $m>l$. Let
$h_2(x, y, z)=\pi^{m-l}x^p+\alpha y^nz^l+\beta\pi^nz^{n+l}$.

If $l=1$, then $\bar{h}_2=\bar{\alpha}y^nz$. It is easy to see that
${\rm Sing}_{\bar{h}_2}(\mathbb{F}_q)\bigcap \bar{D}=\emptyset$,
which infers that $S(h_2, D)=\emptyset$.
Then by Lemma \ref{lem 2.1}, we have
\begin{equation}\label{eqno 2.6}
Z_f(s, \chi, D)=q^{-ls}Z_{h_2}(s, \chi, D)=
\dfrac{H_{2, 2}(q^{-s}, \chi)}{1-q^{-1-s}}
\end{equation}
as required, where $H_{2, 2}(q^{-s}, \chi)$ is a
polynomial with complex coefficients.

If $l>1$, then $\bar{h}_2=\bar{\alpha}y^nz^l$. It follows that
$${\rm Sing}_{\bar{h}_2}(\mathbb{F}_q)\bigcap \bar{D}
=\{(x, y, z)\in \mathbb{F}_q^3|x, y\in \mathbb{F}_q^{\times}, z=\bar{0} \}.$$
Thus $S(h_2, D)=\{(x, y, z)\in R^3|z=0, x\ne 0, y\ne 0\}$ and
$D_{S(h_2, D)}=(\mathcal{O}_K^{\times})^2\times\pi\mathcal{O}_K$.
Then using Lemma \ref{lem 2.2}, we obtain that
\begin{align}\label{2.7}
Z_f(s, \chi, D)&=q^{-ls}Z_{h_2}(s, \chi, D)\nonumber\\
&=q^{-ls}\Big(v(\bar{h}_2, D, \chi)
+\sigma(\bar{h}_2, D, \chi)\dfrac{(1-q^{-1})q^{-s}}{1-q^{-1-s}}
+Z_{h_2}(s, \chi, D_{S(h_2, D)})\Big)\nonumber\\
&=\dfrac{H_{2,3}(q^{-s}, \chi)}{1-q^{-1-s}}
+q^{-ls}Z_{h_2}(s, \chi, D_{S(h_2, D)}),
\end{align}
where $H_{2,3}(q^{-s}, \chi)$ is a polynomial with complex coefficients.

For $Z_{h_2}(s, \chi, D_{S(h_2, D)})$, we make the change of variables
of the form: $(x, y, z)\mapsto (x_1, y_1, \pi z_1)$ and get that
$|dx dy dz|=|\pi ||dx_1dy_1dz_1|=q^{-1}|dx_1dy_1dz_1|.$ Hence
\begin{align}\label{2.8}
&Z_{h_2}(s, \chi, D_{S(h_2, D)})\nonumber\\
=&\int_{D_{S(h_2, D)}}
\chi(ac (\pi^{m-l}x^p+\alpha y^nz^l+\beta\pi^nz^{n+l}))
|\pi^{m-l}x^p+\alpha y^nz^l+\beta\pi^nz^{n+l}|^s|dxdydz|\nonumber\\
=&\int_D\chi(ac (\pi^{m-l}{x_1}^p+\alpha\pi^l{y_1}^n{z_1}^l
+\beta\pi^{2n+l}{z_1}^{n+l}))\nonumber\\
& \times |\pi^{m-l}{x_1}^p+\alpha\pi^l{y_1}^n{z_1}^l
+\beta\pi^{2n+l}{z_1}^{n+l}|^sq^{-1}|dx_1dy_1dz_1|\nonumber\\
=&q^{-1}Z_{h_3}(s, \chi, D),
\end{align}
where $h_3(x, y, z):=\pi^{m-l}x^p+\alpha\pi^ly^nz^l+\beta\pi^{2n+l}z^{n+l}$.
Then putting (\ref{2.8}) into (\ref{2.7}) yields that
\begin{equation*}
Z_f(s, \chi, D)=\dfrac{H_{2,3}(q^{-s}, \chi)}{1-q^{-1-s}}
+q^{-1-ls}Z_{h_3}(s, \chi, D).
\end{equation*}

Let $m=dl+r$ with $d$ and $r$ being positive integers and
$0\leq r<l$. By Lemma \ref{lem 2.2} applied to the polynomial
$h_3$ for $d-1$ times, the above argument finally arrives at
\begin{equation}\label{eqno 2.9}
Z_{f}(s, \chi, D)=\dfrac{H_{2,4}(q^{-s}, \chi)}{1-q^{-1-s}}
+q^{-d-dls}Z_{h_4}(s, \chi, D),
\end{equation}
where $h_4(x, y, z):=\pi^rx^p+\alpha\pi^ly^nz^l+\beta\pi^{(d+1)n+l}z^{n+l}$,
and $H_{2,4}(q^{-s}, \chi)$ is a polynomial with complex coefficients.
Since $r<l$, then one can apply the result for {\sc Case 1} gives us that
\begin{equation}\label{eqno 2.10}
Z_{h_4}(s, \chi, D):=H_{2,6}(q^{-s}, \chi),
\end{equation}
with $H_{2,6}$ being a polynomial with complex coefficients.
Now from (\ref{eqno 2.9}) and (\ref{eqno 2.10}), we can derive that
\begin{equation}\label{eqno 2.11}
Z_{f}(s, \chi, D)=\dfrac{H_{2,7}(q^{-s}, \chi)}{1-q^{-1-s}}
\end{equation}
as desired, where $H_{2,7}$ is a polynomial with complex
coefficients. This finishes the proof of Lemma
\ref{lem 2.8} for Case 2. Hence Lemma \ref{lem 2.8} is proved.
\end{proof}

\begin{lem}\label{lem 2.9}
Let $x=\pi^mx_1\in \mathcal{O}_K$ with $m$ being a nonnegative integer.
Then each of the following is true.

{\rm (1).} If ${\rm ord}(x)\ge m$, then $x_1\in \mathcal{O}_K$.

{\rm (2).} If ${\rm ord}(x)=m$, then $x_1\in \mathcal{O}_K^{\times}.$
\end{lem}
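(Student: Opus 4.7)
The plan is to reduce both statements to direct computations with the valuation ${\rm ord}$, using only the defining property ${\rm ord}(\pi) = 1$ and the fact that ${\rm ord}$ is additive on products. Recall from the introduction that $\mathcal{O}_K = \{y \in K \mid {\rm ord}(y) \geq 0\}$ and $\mathcal{O}_K^{\times} = \{y \in K \mid {\rm ord}(y) = 0\}$, so membership in either set is characterized purely by the valuation.

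First I would observe that since $x = \pi^m x_1$, additivity of ${\rm ord}$ gives
\[
{\rm ord}(x) = {\rm ord}(\pi^m) + {\rm ord}(x_1) = m + {\rm ord}(x_1),
\]
so that ${\rm ord}(x_1) = {\rm ord}(x) - m$. (Strictly, to apply additivity we need $x_1 \neq 0$; the case $x = 0$ forces $x_1 = 0 \in \mathcal{O}_K$, which handles part (1) trivially, and is excluded in part (2) because then ${\rm ord}(x) = +\infty \neq m$.)

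For part (1), the hypothesis ${\rm ord}(x) \geq m$ immediately yields ${\rm ord}(x_1) \geq 0$, so $x_1 \in \mathcal{O}_K$. For part (2), the hypothesis ${\rm ord}(x) = m$ gives ${\rm ord}(x_1) = 0$, so $x_1 \in \mathcal{O}_K^{\times}$. There is essentially no obstacle here; the result is just the statement that the valuation ${\rm ord}$ behaves well under factoring out a power of the uniformizer, and the proof amounts to one application of additivity plus the definitions of $\mathcal{O}_K$ and $\mathcal{O}_K^{\times}$.
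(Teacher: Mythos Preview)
Your proof is correct and follows essentially the same approach as the paper: both arguments use additivity of the valuation together with ${\rm ord}(\pi)=1$ to compute ${\rm ord}(x_1)={\rm ord}(x)-m$ and conclude via the characterizations of $\mathcal{O}_K$ and $\mathcal{O}_K^{\times}$. Your treatment is in fact slightly more careful than the paper's, since you explicitly handle the degenerate case $x=0$.
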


\begin{proof}
If ${\rm ord}(x)\ge m$, then we have ${\rm ord}(\pi^mx_1)\ge m$. Since
${\rm ord}(\pi)=1$, one deduces that ${\rm ord}(x_1)\ge m-m=0$.
So one has $x_1\in \mathcal{O}_K$ as desired. Part (1) is proved.

If ${\rm ord}(x)=m$, then it follows that ${\rm ord}(x_1)=m-m=0$ which
infers that $x_1\in \mathcal{O}_K^{\times }$ as required.
So part (2) is true. This concludes the proof of Lemma \ref{lem 2.9}.
\end{proof}

\section{\bf Proof of Theorem \ref{thm 1.1}}

In this section, we present the proof of Theorem \ref{thm 1.1}.
First of all, for any given nonnegative integers $i, j$ and $k$,
we introduce the transform $T_{i,j,k}$ of variables defined as follows:
\begin{equation}\label{eqno 3.1}
T_{i,j,k}(x, y, z):=(\pi^ix_1, \pi^jy_1, \pi^kz_1).
\end{equation}
Let $J(T_{i,j,k})$ be the Jacobian determinant associated to $T_{i,j,k}$.
Then from the definition ($\ref{eqno 3.1}$), we deduce that $J(T_{i,j,k})=\pi^{i+j+k}$.
It follows that for any $D\subseteq \mathcal{O}_K^3$, we have
\begin{align}\label{3.2}
Z_f(s, \chi, D)
=&\int_{D}{\chi(ac (f(x, y, z)))|f(x, y, z)|^s|dxdydz|}\nonumber\\
=&\int_{D_1}
{\chi(ac (f(\pi^ix_1, \pi^jy_1, \pi^kz_1)))
|f(\pi^ix_1, \pi^jy_1, \pi^kz_1)|^s|J(T_{i,j,k})||dx_1dy_1dz_1|}\nonumber\\
=&q^{-i-j-k}\int_{D_1}
{\chi(ac (f(\pi^ix_1, \pi^jy_1, \pi^kz_1)))
|f(\pi^ix_1, \pi^jy_1, \pi^kz_1)|^s|dx_1dy_1dz_1|},
\end{align}
where $D_1$ is the domain of $(x_1, y_1, z_1)$. In what follows,
we can give the proof of Theorem \ref{thm 1.1}. \\

{\it Proof of Theorem \ref{thm 1.1}}. At first, we define a set $A$ by
$$A:=\{(x, y, z)\in\mathcal{O}_K^3|
{\rm ord}(x)\geq \omega,\ {\rm ord}(y)\geq 1,\ {\rm ord}(z)\geq 1\},$$
where $\omega=\frac{n+l}{p}$. Then by (\ref{3.2}), we have
\begin{align} \label{3.2'}
Z_f(s, \chi, A)=q^{-i-j-k}\int_{\tilde A}
{\chi(ac (f(\pi^ix, \pi^jy, \pi^kz)))
|f(\pi^ix, \pi^jy, \pi^kz)|^s|dxdydz|},
\end{align}
where
$$\tilde A=\{(x, y, z)\in K^3| (\pi^i x, \pi^j y, \pi^k z)\in\mathcal{O}_K^3,
{\rm ord}(\pi^ix)\geq \omega,\ {\rm ord}(\pi^jy)\geq 1,\ {\rm ord}(\pi^kz)\geq 1\}.$$
Now letting $i=\omega,\ j=k=1$, then Lemma \ref{lem 2.9} (1) gives us that
$\tilde A=\mathcal{O}_K^3$. So by (\ref{3.2'}), we derive that
\begin{align} \label{3.3'}
&Z_f(s, \chi, A) \nonumber \\
=&q^{-(\omega+2)}\int_{\mathcal{O}_K^3}
\chi(ac (\pi^{n+l}x^p+\alpha\pi^{n+l}y^nz^l+\beta\pi^{n+l}z^{n+l}))\nonumber\\
& \times |\pi^{n+l}x^p+\alpha\pi^{n+l}y^nz^l+\beta\pi^{n+l}z^{n+l}|^s|dxdydz|\nonumber\\
=&q^{-(\omega+2)-(n+l)s}\int_{\mathcal{O}_K^3}
\chi(ac (x^p+\alpha y^nz^l+\beta z^{n+l}))
|x^p+\alpha y^nz^l+\beta z^{n+l}|^s|dxdydz|\nonumber\\
=&q^{-(\omega+2)-(n+l)s}Z_f(s, \chi).
\end{align}
On the other hand, since $\mathcal{O}_K^3=A\cup A^c$,
where $A^c$ is the complement of $A$ in $\mathcal{O}_K^3$,
one has
$Z_f(s, \chi)=Z_f(s, \chi, A)+Z_f(s, \chi, A^c)$. Therefore
\begin{align} \label{3.3''}
Z_f(s, \chi, A)=Z_f(s, \chi)-Z_f(s, \chi, A^c).
\end{align}
So from (\ref{3.3'}) and (\ref{3.3''}) it follows that
\begin{align}\label{3.3}
Z_f(s, \chi)=\dfrac{1}{1-q^{-(\omega+2)-(n+l)s}}Z_f(s, \chi, A^c).
\end{align}

It is easy to see that $A^c$ can be
decomposed as the disjoint union of the following seven sets:
\begin{align*}
A_1&=\{(x, y, z)|{\rm ord}(x)\geq \omega,\ {\rm ord}(y)=0,\ {\rm ord}(z)\geq 1\},\\
A_2&=\{(x, y, z)|{\rm ord}(x)\geq \omega,\ {\rm ord}(y)\geq 1,\ {\rm ord}(z)=0\},\\
A_3&=\{(x, y, z)|{\rm ord}(x)\geq \omega,\ {\rm ord}(y)=0,\ {\rm ord}(z)=0\},\\
A_4&=\{(x, y, z)|0\le{\rm ord}(x)<\omega,\ {\rm ord}(y)\geq 1,\ {\rm ord}(z)\geq 1\},\\
A_5&=\{(x, y, z)|0\le{\rm ord}(x)<\omega,\ {\rm ord}(y)=0,\ {\rm ord}(z)\geq 1\},\\
A_6&=\{(x, y, z)|0\le{\rm ord}(x)<\omega,\ {\rm ord}(y)\geq 1,\ {\rm ord}(z)=0\},\\
A_7&=\{(x, y, z)|0\le{\rm ord}(x)<\omega,\ {\rm ord}(y)=0,\ {\rm ord}(z)=0\}.
\end{align*}
Then one has
\begin{equation}\label{eqno 3.4}
Z_f(s, \chi)=\dfrac{1}{1-q^{-(\omega+2)-(n+l)s}}
\sum_{i=1}^{7}Z_f(s, \chi, A_i).
\end{equation}
In order to get the formula of $Z_f(s, \chi)$, we need to calculate
the seven integrals on the right hand side of (\ref{eqno 3.4}),
which will be done in the following.

For $Z_f(s, \chi, A_1)$, we set $i=\omega,\ j=0,\ k=1$ in (\ref{eqno 3.1}).
Then by (\ref{3.2}), we have
\begin{align*}
&Z_f(s, \chi, A_1)=q^{-(\omega+1)}\\
&\times\int_{B_1}\chi(ac (\pi^{n+l}x^p+\alpha\pi^{l}y^nz^l+\beta\pi^{n+l}z^{n+l}))
|\pi^{n+l}x^p+\alpha\pi^{l}y^nz^l+\beta\pi^{n+l}z^{n+l}|^s|dxdydz|\\
=&q^{-(\omega+1)-ls}
\int_{B_1}\chi(ac (\pi^{n}x^p+\alpha y^nz^l+\beta\pi^{n}z^{n+l}))
|\pi^{n}x^p+\alpha y^nz^l+\beta\pi^{n}z^{n+l}|^s|dxdydz|,
\end{align*}
where
$$B_1=\{(x, y, z)\in K^3| (\pi^{\omega} x, y, \pi z)\in\mathcal{O}_K^3,
{\rm ord}(\pi^{\omega}x)\geq \omega,\ {\rm ord}(y)=0,\ {\rm ord}(\pi z)\geq 1\}.$$
By Lemma \ref{lem 2.9}, we get that
$B_1=\mathcal{O}_{K}\times{\mathcal{O}_{K}}^{\times}\times\mathcal{O}_{K}$.
Now we make the change of variables of the form:
$(x, y, z)\mapsto (uw^{(n+l)/p}, w, vw)$.
Since $|w|=1$ for $w\in \mathcal{O}_K^{\times}$, one has
\begin{align*}
&Z_f(s, \chi, A_1)\\
=&q^{-(\omega+1)-ls}\int_{B_1}
\chi(ac (w^{n+l}(\pi^{n}u^p+\alpha v^l+\beta\pi^{n}v^{n+l})))\\
&\times|w^{n+l}(\pi^{n}u^p+\alpha v^l+\beta\pi^{n}v^{n+l})|^s
|-w^{\frac{n+l}{p}+1}||dudvdw|\\
=&q^{-(\omega+1)-ls}\int_{\mathcal{O}_K^2}
\chi(ac (\pi^{n}u^p+\alpha v^l+\beta\pi^{n}v^{n+l}))
|\pi^{n}u^p+\alpha v^l+\beta\pi^{n}v^{n+l}|^s|dudv|\\
&\times\int_{\mathcal{O}_K^{\times}}{\chi(ac (w^{n+l}))|dw|}.
\end{align*}
By Lemma \ref{lem 2.6}, one has
\begin{align*}
\int_{\mathcal{O}_K^{\times}}{\chi(ac (x^{n+l}))|dx|}
={\left\{\begin{array}{rl}
1-q^{-1}, &{\rm if}\ \chi^{n+l}=\chi_{{\rm triv}},\\
0, &{\rm if}\ \chi^{n+l}\neq \chi_{{\rm triv}}.
\end{array}\right.}
\end{align*}
So it follows that
\begin{align*}
Z_f(s, \chi, A_1)={\left\{\begin{array}{rl}
(1-q^{-1})q^{-(\omega+1)-ls}Z_g(s, \chi),
&{\rm if}\ \chi^{n+l}=\chi_{{\rm triv}},\\
0, &{\rm if}\ \chi^{n+l}\neq \chi_{{\rm triv}},
\end{array}\right.}
\end{align*}
where $g(u, v):=\pi^{n}u^p+\alpha v^l+\beta\pi^{n}v^{n+l}$.
One can easily check that $g$ is a polynomial globally non-degenerate
with respect to its Newton polyhedra. Then by Lemma \ref{lem 2.5},
we have
$$Z_g(s, \chi)=\dfrac{\tilde F_1^(q^{-s}, \chi)}
{(1-q^{-1-s})(1-q^{-p-l-pls})},$$
where $\tilde F_1(q^{-s}, \chi)$ is a polynomial with complex coefficients.
Thus
\begin{align}\label{3.5}
Z_f(s, \chi, A_1)={\left\{\begin{array}{rl}
\dfrac{F_1(q^{-s}, \chi)}
{(1-q^{-1-s})(1-q^{-p-l-pls})},
&{\rm if}\ \chi^{n+l}=\chi_{{\rm triv}},\\
0, &{\rm otherwise},
\end{array}\right.}
\end{align}
where $F_1(q^{-s}, \chi)$ is a polynomial with complex coefficients.

For $Z_f(s, \chi, A_2)$, setting $i=\omega,\ j=1$ and $k=0$ in (\ref{eqno 3.1})
and by (\ref{3.2}) and $\omega=\frac{n+l}{p}$, we have
\begin{align*}
&Z_f(s, \chi, A_2)\\
=&q^{-\omega-1}\int_{B_2}
\chi(ac (\pi^{n+l}x^p+\alpha\pi^ny^nz^l+\beta z^{n+l}))
|\pi^{n+l}x^p+\alpha\pi^ny^nz^l+\beta z^{n+l}|^s|dxdydz|,
\end{align*}
where
$$B_2=\{(x, y, z)\in K^3| (\pi^{\omega} x, \pi y, z)\in\mathcal{O}_K^3,
{\rm ord}(\pi^{\omega}x)\geq \omega,\ {\rm ord}(\pi y)\ge 1,\ {\rm ord}(z)=0 \}.$$
Then Lemma \ref{lem 2.9} gives us that $B_2=\mathcal{O}_K^{2}\times\mathcal{O}_K^{\times}$.
Since $\beta\in \mathcal{O}_K^{\times}$, it infers that
$|\pi^{n+l}x^p+\alpha\pi^ny^nz^l+\beta z^{n+l}|=1$ if $(x, y, z)\in B_2$.
Hence one has
\begin{equation}\label{eqno 3.6}
Z_f(s, \chi, A_2)=q^{-\omega-1}\int_{B_2}
\chi(ac (\pi^{n+l}x^p+\alpha\pi^ny^nz^l+\beta z^{n+l}))|dxdydz|
:=C(\chi),
\end{equation}
where $C(\chi)$ is a constant depended on $\chi$ and independent of $s$.

For $Z_f(s, \chi, A_3)$, we set $i=\omega,\ j=k=0$ in (\ref{eqno 3.1}).
Then by (\ref{3.2}), one has
\begin{align*}
&Z_f(s, \chi, A_3)\\
=& q^{-\omega}\int_{B_3}
\chi(ac (\pi^{n+l}x^p+\alpha y^nz^l+\beta z^{n+l}))
|\pi^{n+l}x^p+\alpha y^nz^l+\beta z^{n+l}|^s|dxdydz|,
\end{align*}
where
$$B_3=\{(x, y, z)\in K^3| (\pi^{\omega} x, y, z)\in\mathcal{O}_K^3,
{\rm ord}(\pi^{\omega}x)\geq \omega,\ {\rm ord}(y)={\rm ord}(z)=0 \}.$$
By Lemma \ref{lem 2.9}, we derive that
$B_3=\mathcal{O}_K\times(\mathcal{O}_K^{\times})^2$.

Let $g(x, y, z)=\pi^{n+l}x^p+\alpha y^nz^l+\beta z^{n+l}$. Then
$\bar{g}(x, y, z)=\bar{\alpha} y^nz^l+\bar{\beta} z^{n+l}$.
If $P=(x, y, z)\in {\rm Sing}_{\bar{g}}(\mathbb{F}_q)$, then
$\frac{\partial \bar{g}}{\partial y}=\bar{\alpha}nz^ly^{n-1}=\bar{0}$
implies that $zy=\bar{0}$ since $p\nmid n$. Thus $y=\bar 0$ or $z=\bar 0$.
But the image $\bar{B_3}$ of $B_3=\mathcal{O}_K\times(\mathcal{O}_K^{\times})^2$
under the canonical homomorphism is given by
$\bar{B_3}=\mathbb{F}_q\times(\mathbb{F}_q^{\times})^2$.
Hence we must have
${\rm Sing}_{\bar{g}}(\mathbb{F}_q)\bigcap \bar{B_3}=\emptyset$,
which tells us that $S(g, B_3)=\emptyset$. Using Lemma \ref{lem 2.1},
we obtain that
\begin{equation}\label{eqno 3.7}
Z_f(s, \chi, A_3)=\dfrac{F_3(q^{-s}, \chi)}{1-q^{-1-s}},
\end{equation}
where $F_3(q^{-s}, \chi)$ is a polynomial with complex coefficients.

In what follows, let $a$ be an integer with $0\le a<\omega$.
For any integer $b$ with $4\le b\le 7$, we define the set $A_b^a$ by
$$A_b^a:=\{(x, y, z)\in A_b|{\rm ord}(x)=a\}.$$
Then it follows that $A_b=\bigcup_{0\le a<\omega}A_b^a$ and
\begin{equation}\label{eqno 3.7'}
Z_f(s, \chi, A_b)=\sum_{a=0}^{\omega-1}{Z_f(s, \chi, A_b^a)}.
\end{equation}
For $Z_f(s, \chi, A_4^a)$, we set $i=a,\ j=k=1$ in (\ref{eqno 3.1}).
Therefore by (\ref{3.2}) and (\ref{eqno 3.7'}), we get that
\begin{align*}
Z_f(s, \chi, A_4)=&\sum_{a=0}^{\omega-1}{Z_f(s, \chi, A_4^a)}\\
=&\sum_{a=0}^{\omega-1}q^{-a-2}\int_{B_4^a}
\chi(ac (\pi^{ap}x^p+\alpha\pi^{n+l}y^nz^l+\beta\pi^{n+l}z^{n+l}))\\
&\times|\pi^{ap}x^p+\alpha\pi^{n+l}y^nz^l+\beta\pi^{n+l}z^{n+l}|^s|dxdydz|.\\
=&\sum_{a=0}^{\omega-1}q^{-a-2-aps}\int_{B_4^a}
\chi(ac (x^p+\alpha\pi^{n+l-ap}y^nz^l+\beta\pi^{n+l-ap}z^{n+l}))\\
&\times|x^p+\alpha\pi^{n+l-ap}y^nz^l+\beta\pi^{n+l-ap}z^{n+l}|^s|dxdydz|,
\end{align*}
where
$$B_4^a=\{(x, y, z)\in K^3| (\pi^{a} x, \pi y, \pi z)\in\mathcal{O}_K^3,
{\rm ord}(\pi^{a}x)=a,\ {\rm ord}(y)\ge 1,\ {\rm ord}(z)\ge 1 \}.$$
So Lemma \ref{lem 2.9} tells us that
$B_4^a=\mathcal{O}_K^{\times}\times\mathcal{O}_K^2$. Since
$x\in \mathcal{O}_K^{\times}$, we have
$|x^p+\alpha\pi^{n+l-ap}y^nz^l+\beta\pi^{n+l-ap}z^{n+l}|=1$
for any $0\leq a<\omega$ and $(x, y, z)\in B_4^a$.
Therefore we arrive at
\begin{align}\label{3.8}
Z_f(s, \chi, A_4)
=&\sum_{a=0}^{\omega-1}q^{-a-2-aps}\int_{B_4^a}
\chi(ac (x^p+\alpha\pi^{n+l-ap}y^nz^l
+\beta\pi^{n+l-ap}z^{n+l}))|dxdydz|\nonumber\\
:=&F_4(q^{-s}, \chi),
\end{align}
where $F_4(q^{-s}, \chi)$ is a polynomial with complex coefficients.

For $Z_f(s, \chi, A_5^a)$, we set $i=a,\ j=0,\ k=1$ in (\ref{eqno 3.1}).
Then by (\ref{3.2}) and (\ref{eqno 3.7'}), one has
\begin{align}\label{3.9}
&Z_f(s, \chi, A_5)=\sum_{a=0}^{\omega-1}{Z_f(s, \chi, A_5^a)}
=\sum_{a=0}^{\omega-1}q^{-a-1}Z_{f_{5, a}}(s, \chi, B_5^a)
\end{align}
where $f_{5, a}:=\pi^{ap}x^p+\alpha\pi^{l}y^nz^l+\beta\pi^{n+l}z^{n+l}$
and
$$B_5^a=\{(x, y, z)\in K^3| (\pi^{a} x, y, \pi z)\in\mathcal{O}_K^3,
{\rm ord}(\pi^{a}x)=a,\ {\rm ord}(y)=0, {\rm ord}(z)\ge 1 \}.$$
But Lemma \ref{lem 2.9} gives us that
$B_5^a=(\mathcal{O}_K^{\times})^2\times\mathcal{O}_K$.
Since $\alpha\in\mathcal{O}_K^{\times}$, $\beta^{1/p}\in\mathcal{O}_K^{\times}$,
$p\mid (n+l)$ and $p\nmid l$, it then follows from Lemma \ref{lem 2.8} that
\begin{align}\label{3.10}
Z_{f_{5, a}}(s, \chi, B_5^a)=&{\left\{\begin{array}{rl}
&F_{5, a}(q^{-s}, \chi),\ \ \ \ \ \ {\rm if} \ 0\leq a<\dfrac{l}{p},\\
&\dfrac{\tilde F_{5, a}(q^{-s}, \chi)}{1-q^{-1-s}},\ \ \ \ \
{\rm if} \ \dfrac{l}{p}\leq a<\omega,
\end{array}\right.}
\end{align}
where $F_{5, a}(q^{-s}, \chi)$ and $\tilde F_{5, a}(q^{-s}, \chi)$
are polynomials with complex coefficients.
By (\ref{3.9}) and (\ref{3.10}), one derives that
\begin{equation}\label{eqno 3.11}
Z_f(s, \chi, A_5)=\dfrac{F_5(q^{-s}, \chi)}{1-q^{-1-s}},
\end{equation}
where $F_5(q^{-s}, \chi)$ is a polynomial with complex coefficients.

For $Z_f(s, \chi, A_6^a)$, we set $i=a,\ j=1,\ k=0$ in (\ref{eqno 3.1}).
By (\ref{3.2}) and (\ref{eqno 3.7'}), one deduces that
\begin{align}\label{3.12}
Z_f(s, \chi, A_6)
&=\sum_{a=0}^{\omega-1}{Z_f(s, \chi, A_6^a)}\nonumber\\
&=\sum_{a=0}^{\omega-1}q^{-1-a}Z_{f_{6, a}}(s, \chi, B_6^a),
\end{align}
where $f_{6, a}(x, y, z):=\pi^{ap}x^p+\alpha\pi^ny^nz^l+\beta z^{n+l}$
and
$$B_6^a=\{(x, y, z)\in K^3| (\pi^{a} x, \pi y, z)\in\mathcal{O}_K^3,
{\rm ord}(\pi^{a}x)=a,\ {\rm ord}(y)\ge 1, {\rm ord}(z)=0 \}.$$
By Lemma \ref{lem 2.9}, we have
$B_6^a=\mathcal{O}_K^{\times}\times\mathcal{O}_K\times\mathcal{O}_K^{\times}$.

For any positive integer $a$ with $a<\omega$, since $\beta \in\mathcal{O}_K^{\times}$,
one derives that
$|\pi^{ap}x^p+\alpha\pi^ny^nz^l+\beta z^{n+l}|=1$ if $(x, y, z)\in B_6^a$.
Then it follows that
\begin{equation}\label{eqno 3.13}
Z_{f_{6, a}}(s, \chi, B_6^a)
=\int_{B_6^a}\chi(ac (\pi^{ap}x^p+\alpha\pi^ny^nz^l+\beta z^{n+l}))|dxdydz|
:=c(a, \chi),
\end{equation}
where $c(a, \chi)$ is a constant depended on $a$ and $\chi$ and
independent of $s$. For the case $a=0$, since
$\alpha\in\mathcal{O}_K^{\times}$, $\beta^{1/p}\in\mathcal{O}_K^{\times}$,
$p\mid (n+l)$ and $p\nmid l$, Lemma \ref{lem 2.7} tells us that
\begin{equation}\label{eqno 3.14}
Z_{f_{6, 0}}(s, \chi, B_6^a)=\dfrac{\tilde F_6(q^{-s}, \chi)}
{(1-q^{-1-s})(1-q^{-p-n-pns})},
\end{equation}
where $\tilde F_6(q^{-s}, \chi)$ is a polynomial with complex coefficients.
Putting (\ref{eqno 3.13}) and (\ref{eqno 3.14}) into (\ref{3.12}),
we arrive at
\begin{equation}\label{eqno 3.15}
Z_f(s, \chi, A_6)=\dfrac{F_6(q^{-s}, \chi)}{(1-q^{-1-s})(1-q^{-p-n-pns})},
\end{equation}
where $F_6(q^{-s}, \chi)$ is a polynomial with complex coefficients.

For $Z_f(s, \chi, A_7^a)$, we set $i=a,\ j=k=0$ in (\ref{eqno 3.1}).
Then it follows from (\ref{3.2}) and (\ref{eqno 3.7'}) that
\begin{align}\label{3.16}
Z_f(s, \chi, A_7)&=\sum_{a=0}^{\omega-1}{Z_f(s, \chi, A_7^a)}\nonumber\\
&=\sum_{a=0}^{\omega-1}q^{-a}{Z_{f_{7, a}}(s, \chi, B_7^a)},
\end{align}
where $f_{7, a}(x, y, z):=\pi^{ap}x^p+\alpha y^nz^l+\beta z^{n+l}$ and
$$B_7^a=\{(x, y, z)\in K^3| (\pi^{a} x, y, z)\in\mathcal{O}_K^3,
{\rm ord}(\pi^{a}x)=a,\ {\rm ord}(y)={\rm ord}(z)=0 \}.$$
Notice that Lemma \ref{lem 2.9} tells us that $B_7^a=(\mathcal{O}_K^{\times})^3$.

For any integer $a$ with $0\leq a<\omega$, if
$P=(x, y, z)\in {\rm Sing}_{\bar{f}_{7, a}}(\mathbb{F}_q)$,
then $\frac{\partial \bar{f}_{7, a}}{\partial y}=\bar{\alpha}ny^{n-1}z^l=\bar{0}$
implies that $yz=\bar{0}$ since $p\nmid n$. Since
$B_7^a=(\mathcal{O}_K^{\times})^3$, we have $\bar{B_7^a}=(\mathbb{F}_q^{\times})^3$.
Hence we have
${\rm Sing}_{\bar{f}_{7, a}}(\mathbb{F}_q)\bigcap \bar{B_7^a}=\emptyset$,
which implies that $S(f_{7, a}, B_7^a)=\emptyset$. Then by Lemma \ref{lem 2.1},
one has
\begin{equation}\label{eqno 3.17}
Z_{f_{7, a}}(s, \chi, B_7^a)=\dfrac{F_{7, a}(q^{-s}, \chi)}{1-q^{-1-s}},
\end{equation}
where $F_{7, a}(q^{-s}, \chi)$ is a polynomial with complex coefficients.
Then by (\ref{eqno 3.17}) and (\ref{3.16}), we obtain that
\begin{equation}\label{eqno 3.18}
Z_f(s, \chi, A_7)=\dfrac{F_7(q^{-s}, \chi)}{1-q^{-1-s}},
\end{equation}
where $F_7(q^{-s}, \chi)$ is a polynomial with complex coefficients.

Finally, combining (\ref{eqno 3.4})-(\ref{eqno 3.7}), (\ref{3.8}),
(\ref{eqno 3.11}), (\ref{eqno 3.15}) with (\ref{eqno 3.18}) gives
us the desired result. This finishes the proof of Theorem
\ref{thm 1.1}. \hfill$\Box$\\
\\
{\bf Remark 3.1.}
When we calculate $Z_f(s, \chi, B_1)$, we have used Lemma \ref{lem 2.5}
to calculate $Z_g(s, \chi, B_1)$,
where $g(u, v)=\pi^{n}u^p+\alpha v^l+\beta\pi^{n}v^{n+l}$ and $l>2$.
But if $l=1$, we cannot use this lemma because $g$ is not a polynomial
globally non-degenerate with respect to its Newton polyhedra
(since the origin of $K^n$ is not a singular point of $g$). Instead,
let $g_1(x, y, z)=\pi^{n}x^p+\alpha zy^n+\beta\pi^{n}z^{n+1}$,
one can easily derive that $S(g_1, D_1)=\emptyset$.
Thus by Lemma \ref{lem 2.1}, we get that
$$Z_f(s, \chi, B_1)=\dfrac{M(q^{-s}, \chi)}{1-q^{-1-s}},$$
where $M(q^{-s}, \chi)$ is a polynomial with complex coefficients.
This explains why in our situation that $l>1$, we can obtain a new extra
candidate pole which does not appear in the case $l=1$ due to \cite{[LIS]}.

\begin{exm}
Let $K$ be a non-archimedean local field with
characteristic $3$, and denote its residue field by $\mathbb{F}_q$,
where $q=3^r$ and $r$ is an odd integer. Take $\chi=\chi_{{\rm triv}}$.
We consider the hybrid polynomial of following form:
\begin{align}
g(x, y, z)=x^3+yz^2\sum_{i=0}^{3}{\dbinom{4}{1+i}y^i(z-y)^{3-i}}.
\end{align}
Then by (\ref{eqno 1.4}), $f(x, y, z)=x^3+y^4z^2+z^6$.
An explicit calculation of the seven integrals of (\ref{eqno 3.4})
shows that
\begin{align*}
Z_f(s, \chi, B_1)
&=\dfrac{q^{-3}t^2(1-q^{-1})^2}{(1-q^{-5}t^6)(1-q^{-1}t)}
\Big(-q^{-6}t^8+q^{-6}t^7+(q^{-4}-q^{-5})t^6-q^{-4}t^5\\
&+q^{-2}t^4-q^{-2}t^3+q^{-1}t^2-q^{-1}t+1\Big),\\
Z_f(s, \chi, B_2)&=q^{-3}(1-q^{-1}),\\
Z_f(s, \chi, B_3)&=q^{-2}(1-q^{-1})^2,\\
Z_f(s, \chi, B_4)&=(1-q^{-1})(q^{-2}+q^{-3}t^3),\\
Z_f(s, \chi, B_5)
&=(1-q^{-1})^2(q^{-3}t^3+(q^{-2}-q^{-3})t^2+q^{-1}),\\
Z_f(s, \chi, B_6)
&=\dfrac{q^{-2}(1-q^{-1})^2}{(1-q^{-1}t)(1-q^{-7}t^{12})}
\Big(-(q^{-7}-q^{-8})t^{13}+(q^{-5}-q^{-7})t^{12}-q^{-5}t^{10}\\
&+q^{-3}t^8-q^{-3}t^7+q^{-2}t^6-q^{-2}t^5+t^3-q^{-1}t+1\Big)\\
Z_f(s, \chi, B_7)
&=\dfrac{(1-q^{-1})^2}{1-q^{-1}t}(q^{-3}t+1-q^{-1}-q^{-2}),
\end{align*}
where $t:=q^{-s}$. Then by (\ref{eqno 3.4}),
we deduce that all the candidate poles in
Theorem \ref{thm 1.1} are indeed the poles of $Z_f(s, \chi)$.
\end{exm}

\end{document}